\newtheorem{thm}{Th\'eor\`eme}[section]
\newtheorem{prop}[thm]{Proposition}
\newtheorem{lemme}[thm]{Lemme}
\newtheorem{cor}[thm]{Corollaire}
\theoremstyle{remark} 
\newtheorem{rem}[thm]{Remarque}
\newtheorem{paragr}[thm]{}
\theoremstyle{definition} 
\theoremstyle{plain}
\newcommand{\derR}{\mathbf{R}}
\newcommand{\To}{\longrightarrow}
\newcommand{\Hom}{\operatorname{\mathrm{Hom}}}
\newcommand{\sHom}{\mathit{Hom}}
\newcommand{\op}[1]{{#1}^{\mathit{op}}}
\newcommand{\ho}{\operatorname{\mathrm{Ho}}}
\newcommand{\A}{\mathcal{A}}
\newcommand{\derL}{\mathbf{L}}
\newcommand{\esp}{\mathcal{E}}
\newcommand{\Sp}{\mathit{Sp}}
\newcommand{\Gm}{\mathbf{G}_m}
\renewcommand{\AA}{\mathbf{A}}
\newcommand{\PP}{\mathbf{P}}
\newcommand{\KV}{{I\mspace{-6.mu}K}}
\newcommand{\KH}{\mathit{KH}}
\newcommand{\TT}{\mathit{B}}
\newcommand{\SH}{\mathcal{S}\mspace{-1.45mu}\mathcal{H}}
\newcommand{\ZZ}{\mathbf{Z}}
\newcommand{\KGL}{\mathit{KGL}}
\newcommand{\BGL}{\mathit{BGL}}
\def\TO#1{\mathrel{\hbox to #1mm{\rightarrowfill}}}
\def\OT#1{\mathrel{\hbox to #1mm{\leftarrowfill}}}
\numberwithin{equation}{thm}
\title[Descente par \'eclatements]{Descente par \'eclatements\\
en $K$-th\'eorie invariante par homotopie}
\author[D.-C. Cisinski]{Denis-Charles Cisinski}
\address{Universit\'e Paul Sabatier\\
Institut de Math\'ematiques de Toulouse\\
118 route de Narbonne\\
31062 Toulouse cedex 9\\France}
\email{denis-charles.cisinski@math.univ-toulouse.fr}
\urladdr{http://www.math.univ-toulouse.fr/~dcisinsk/}
\begin{document}
\begin{abstract}
Ces notes donnent une preuve de la repr\'esentabilit\'e de la $K$-th\'eorie
invariante par homotopie dans la cat\'egorie homotopique stable des sch\'emas
(r\'esultat annonc\'e par Voevodsky). On en d\'eduit, gr\^ace au th\'eor\`eme
de changement de base propre en th\'eorie de l'homotopie stable des
sch\'emas, un th\'eor\`eme de descente par \'eclatements en
$K$-th\'eorie invariante par homotopie.
\end{abstract}
\begin{altabstract}
These notes give a proof of the representability of homotopy invariant
$K$-theory in the stable homotopy category of schemes (which was announced by Voevodsky).
One deduces from the proper base change theorem in stable homotopy theory
of schemes a descent by blow-ups theorem for homotopy invariant
$K$-theory.
\end{altabstract}
\maketitle
\section*{Introduction}
La descente par \'eclatements pour la $K$-th\'eorie
invariante par homotopie a \'et\'e prouv\'ee par C.~Haesemeyer~\cite{Has}
pour les sch\'emas de caract\'eristique nulle. Il s'agit de l'un des ingr\'edients
de la preuve de la conjecture de Weibel~\cite[Question 2.9]{Wei0}, d'annulation de la $K$-th\'eorie
n\'egative au del\`a de la dimension de Krull,
pour les sch\'emas de caract\'eristique nulle; voir \cite{CHSW}.

Nous prouvons ici la descente par \'eclatements pour la $K$-th\'eorie invariante par homotopie
pour les sch\'emas noeth\'eriens de dimension de Krull finie (sans aucune restriction
sur la caract\'eristique). L'argument invoqu\'e ici n'utilise pas de r\'esolution des singularit\'es, mais
consiste \`a d\'emontrer un r\'esultat int\'eressant en soit, annonc\'e
par V.~Voevodsky~\cite[Th\'eor\`eme 6.9]{voe}: la repr\'esentabilit\'e
de la $K$-th\'eorie invariante par homotopie dans la cat\'egorie homotopique stable des sch\'emas
de Morel et Voevodsky par le $S^1\wedge\Gm$-spectre $\KGL$.
La descente par \'eclatements en $K$-th\'eorie
invariante par homotopie est alors un simple corollaire
des th\'eor\`emes de changement de base lisse et
de changement de base propre dans le cadre motivique,
d\'emontr\'es par J.~Ayoub~\cite{ay1}, et l\'eg\`erement g\'en\'eralis\'es
dans \cite{CD3}.

Pour terminer ces notes,
en utilisant le fait que la $K$-th\'eorie non-connective est invariante
par homotopie modulo la $p$-torsion pour les sch\'emas de caract\'eristique
\smash{$p>0$}, on en d\'eduit,
sous l'hypoth\`ese de l'existence d'une r\'esolution des singularit\'es \emph{locale}
pour les $k$-sch\'emas de type fini, la conjecture de Weibel en caract\'eristique positive modulo
$p$-torsion (d'apr\`es T.~Geisser \& L.~Hesselholt~\cite{GH}, ainsi que
A.~Krishna~\cite{Kri}, on peut aussi avoir des r\'esultats \`a coefficients entiers, mais
sous l'hypoth\`ese de l'existence d'une r\'esolution des singularit\'es \emph{globale}).
Signalons que, depuis la r\'edaction de la premi\`ere version de ces notes,
la conjecture de Weibel en caract\'eristique $p>0$ modulo $p$-torsion
a \'et\'e d\'emontr\'ee inconditionnellement
par Shane~Kelly \cite{shane}, en utilisant les r\'esultats de
repr\'esentabilit\'e et de descente prouv\'es ici, ainsi que les raffinements apport\'es par O.~Gabber
\`a la th\'eorie des alt\'erations de de~Jong.

Dans ce qui suit, tous les sch\'emas seront noeth\'eriens,
de dimension de Krull finie.

\section{Spectres invariants par homotopie}

\begin{paragr}
Si $E$ est un $S^1$-spectre, on notera $\pi_n(E)$ son $n$-\`eme
groupe d'homotopie stable. Lorsque nous aurons envie
d'insister sur le point de vue cohomologique, nous \'ecrirons
$$H^n(E)=\pi_{-n}(E)\, .$$
\end{paragr}

\begin{paragr}
Soit $S$ un sch\'ema. On consid\`ere la cat\'egorie
$\esp(S)$
des pr\'efaisceaux simpliciaux sur la cat\'egorie des $S$-sch\'ema lisses,
munie de la structure de cat\'egorie de mod\`eles projective
(voir par exemple \cite[Proposition 4.4.16]{ay2}).
On d\'esigne par $\esp_\bullet(S)$ sa variante point\'ee.
On note $\Sp_{S^1}(S)$ la cat\'egorie de mod\`eles stable
des $S^1$-spectres (sym\'etriques) dans $\esp_\bullet(S)$ (ou encore, de mani\`ere
\'equivalente, la cat\'egorie de mod\`eles projective des pr\'efaisceaux en $S^1$-spectres
(sym\'etriques) sur la cat\'egorie des $S$-sch\'emas lisses).
La cat\'egorie homotopique correspondante $\ho(\Sp_{S^1}(S))$
est canoniquement
munie d'une structure de cat\'egorie triangul\'ee engendr\'ee
par ses objets compacts. Une famille g\'en\'e\-ra\-trice est donn\'ee
par la collection des (suspensions des) objets de la forme $\Sigma^\infty(X_+)$,
o\`u $X$ parcourt la classe des $S$-sch\'emas lisses.
Si $E$ est un $S^1$-spectre dans $\esp_\bullet(S)$, on note, pour tout
entier $n$ et pour tout $S$-sch\'ema lisse $X$,
$$H^n(E(X))\simeq \Hom_{\ho(\Sp_{S^1}(S))}(\Sigma^\infty(X_+),\Sigma^nE)$$
le $n$-\`eme groupe de cohomologie de $X$ \`a coefficients dans $E$.
Un morphisme $E\To F$ de $\ho(\Sp_{S^1}(S))$ est un isomorphisme
si et seulement si, pour tout $S$-sch\'ema lisse $X$, et pour tout entier $n$,
il induit un isomorphisme de groupes ab\'eliens $H^n(E(X))\simeq H^n(F(X))$.

On dispose aussi de la sph\`ere de Tate $S^1\wedge\Gm$
(o\`u le groupe multiplicatif $\Gm$ est consid\'er\'e ici comme un pr\'efaisceau
point\'e par $1$). On notera $T$ tout remplacement cofibrant
de $S^1\wedge\Gm$ dans $\esp_\bullet(S)$ de sorte que $T\simeq S^1\wedge\Gm$
dans $\ho(\esp_\bullet(S))$.
\end{paragr}

\begin{paragr}\label{defA1loc}
Un pr\'efaisceau en $S^1$-spectres $E$ est dit \emph{invariant par homotopie}
si, pour tout $S$-sch\'ema lisse $X$, et pour tout entier $n$, la projection
$X\times\AA^1\To X$ induit un isomorphisme
$$H^n(E(X))\simeq H^n(E(X\times\AA^1))\, .$$
On note $\ho_{\AA^1}(\Sp_{S^1}(S))$ la sous-cat\'egorie pleine des
$S^1$-spectres invariants par homotopie. Cette derni\`ere cat\'egorie
peut \^etre d\'ecrite comme une localisation de la cat\'egorie
triangul\'ee $\ho(\Sp_{S^1}(S))$ comme suit. D\'esignons par $\mathcal{A}$
la sous-cat\'egorie localisante (i.e. stables par petites sommes quelconques,
par suspensions et cosuspensions, ainsi que par extensions) de $\ho(\Sp_{S^1}(S))$
engendr\'ee par les c\^ones des morphismes
$\Sigma^\infty(X\times\AA^1_+)\To\Sigma^\infty(X_+)$
(induits par les projections $X\times\AA^1\To X$).
Alors le foncteur vers le quotient de Verdier correspondant
$$\ho(\Sp_{S^1}(S))\To\ho(\Sp_{S^1}(S))/\mathcal{A}$$
admet un adjoint \`a droite pleinement fid\`ele dont l'image essentielle
est pr\'eci\-s\'e\-ment la sous-cat\'egorie pleine de $\ho(\Sp_{S^1}(S))$
form\'ee des $S^1$-spectres invariants par homotopie.
On peut calculer l'adjoint \`a gauche du foncteur d'inclusion
$$i:\ho_{\AA^1}(\Sp_{S^1}(S))\To\ho(\Sp_{S^1}(S))$$
de la mani\`ere suivante.
On rappelle que l'on dispose d'un $S$-sch\'ema cosimplicial
$\Delta^\bullet_S$ d\'efini par
$$\Delta^n_S=S\times\mathrm{Spec}\, \big(\mathbf{Z}[t_0,\ldots,t_n]/(t_0+\dots+t_n-1)\big)\, ,$$
et qu'on a des isomorphismes (non canoniques) $\Delta^n_S\simeq\AA^n_S$.
Si $E$ est un pr\'e\-fais\-ceau de $S^1$-spectres sur la cat\'egorie des $S$-sch\'emas lisses,
on note $R_{\AA^1}(E)$ le $S^1$-spectre d\'efini par la formule
$$R_{\AA^1}(E)=\underset{\quad\Delta^n\in\op{\Delta}}{\derL\varinjlim}\, \derR\sHom(\Sigma^\infty({\Delta^n_{S\, +}}),E)$$
o\`u $\derR\sHom$ d\'esigne le hom interne de la cat\'egorie $\ho(\Sp_{S^1}(S))$, et o\`u
la colimite homotopique est index\'ee par la cat\'egorie oppos\'ee de la cat\'egorie des simplexes).
On v\'erifie imm\'edia\-te\-ment que $R_{\AA^1}(E)$ est invariant par homotopie, et que
le foncteur $R_{\AA^1}$ est l'adjoint \`a gauche du foncteur d'inclusion ci-dessus.
Autrement dit, le morphisme canonique
$$E\To R_{\AA^1}(E)$$
est le morphisme universel de $E$ vers un pr\'efaisceau de $S^1$-spectres
invariant par homotopie.

On dira qu'un morphisme de $\ho(\Sp_{S^1}(S))$ est une \emph{$\AA^1$-\'equivalence}
si son image par le foncteur $R_{\AA^1}$ est un isomorphisme.
On peut donc d\'ecrire la cat\'egorie $\ho_{\AA^1}(\Sp_{S^1}(S))$
comme la localisation de la cat\'egorie $\ho(\Sp_{S^1}(S))$ par la classe
des $\AA^1$-\'equivalences (le foncteur $R_{\AA^1}$ \'etant alors le foncteur
de localisation canonique).
\end{paragr}

\begin{lemme}\label{A1eqHomint}
Soit $C$ un objet compact de $\ho(\Sp_{S^1}(S))$. Le foncteur
$$\derR\sHom(C,-):\ho(\Sp_{S^1}(S))\To\ho(\Sp_{S^1}(S))$$
respecte les $\AA^1$-\'equivalences. En particulier, pour tout
pr\'efaisceau de $S^1$-spec\-tres $E$, on a un isomorphisme
canonique
$$R_{\AA^1}(\derR\sHom(C,E))\simeq\derR\sHom(C,R_{\AA^1}(E))\, .$$
\end{lemme}

\begin{proof}
Pour un $S$-sch\'ema lisse $X$ et un pr\'efaisceau de $S^1$-spectres $E$, on notera
$$E^X=\derR\sHom(\Sigma^\infty(X_+),E)\, .$$
Pour tout pr\'efaisceau de $S^1$-spectres $E$, la projection $\AA^1_S\To S$
induit une $\AA^1$-\'equivalence
$$E\To E^{\AA^1_S}\, .$$
En effet, le morphisme de multiplication $\mu:\AA^1_S\times_S\AA^1_S\To \AA^1_S$
induit un morphisme
$$E^{\AA^1_S}\To E^{\AA^1_S\times_S\AA^1_S}=(E^{\AA^1_S})^{\AA^1_S}\, ,$$
d'o\`u un morphisme
$$h:\Sigma^\infty(\AA^1_{S\, +})\wedge^\derL E^{\AA^1_S}\To E^{\AA^1_S}\, ,$$
lequel est une $\AA^1$-homotopie de l'identit\'e de $E^{\AA^1}$
avec le morphisme compos\'e
$$E^{\AA^1_S}\To E \To E^{\AA^1_S}\, .$$

D\'esignons par $\A'$  la sous-cat\'egorie localisante de $\ho(\Sp_{S^1}(S))$
engendr\'ee par les c\^ones de morphismes de la forme
$C\To C^{\AA^1}$, pour $C$ un objet compact. La remarque pr\'ec\'edente
montre que $\A'\subset \A$ (o\`u $\A$ est la sous-cat\'egorie localisante
introduite au num\'ero \ref{defA1loc}).
On a en fait l'\'egalit\'e $\A=\A'$. En effet, le morphisme de multiplication
$$\Sigma^\infty(\AA^1_{S\, +})\wedge\Sigma^\infty(\AA^1_{S\, +})\To\Sigma^\infty(\AA^1_{S\, +})$$
induit, pour tout $S$-sch\'ema lisse $X$, un morphisme
$$\Sigma^\infty(\AA^1_{X\, +})\To\Sigma^\infty(\AA^1_{X\, +})^{\AA^1_S}\, ,$$
lequel est une $\AA^1$-homotopie (en termes d'espace des chemins)
de l'identit\'e de $\Sigma^\infty(\AA^1_{X\, +})$
avec le morphisme compos\'e
$$\Sigma^\infty(\AA^1_{X\, +})\To \Sigma^\infty(X_+)\To \Sigma^\infty(\AA^1_{X\, +})\, .$$
Le m\^eme type de consid\'erations montre plus g\'en\'eralement que, pour tout entier $n\geq 0$, et pour
tout objet compact $C$ de $\ho(\Sp_{S^1}(S))$, le c\^one du morphisme canonique
$C\To C^{\AA^n_S}$ est dans $\A=\A'$.
\'Etant donn\'e que $\Sigma^\infty(\AA^1_{S\, +})$ est compact, et que
les objets compacts de $\ho(\Sp_{S^1}(S))$ sont stables par produit tensoriel
(smash produit), le foncteur $E\longmapsto E^{\AA^1_S}$
commute aux sommes quelconques, d'o\`u l'on d\'eduit que la classe $\A'$
contient en fait tous les c\^ones de morphismes
de la forme $E\To E^{\AA^1_S}$ pour tout $E$.

Soit $C$ un objet compact de $\ho(\Sp_{S^1}(S))$. Pour montrer que
le foncteur correspondant $\derR\sHom(C,-)$ respecte les $\AA^1$-\'equivalences, il suffit donc \`a pr\'esent
de v\'erifier que ce foncteur envoie $\A'$ dans $\A'$.
Soit $E$ un objet de $\ho(\Sp_{S^1}(S))$. On v\'erifie aussit\^ot que
$$\derR\sHom(C,E^{\AA^1_S})\simeq\derR\sHom(C,E)^{\AA^1_S}\, ,$$
d'o\`u on d\'eduit
la premi\`ere assertion du lemme.

Consid\'erons \`a pr\'esent deux objets $C$ et $E$ de $\ho(\Sp_{S^1}(S))$, avec $C$ compact.
Les $\AA^1$-\'equivalences \'etant stables par produit tensoriel, le spectre
$$\derR\sHom(C,R_{\AA^1}(E))$$
est invariant par homotopie, ce qui implique que
le morphisme canonique
$$\derR\sHom(C,E)\To\derR\sHom(C,R_{\AA^1}(E))$$
induit un morphisme non moins canonique
$$R_{\AA^1}(\derR\sHom(C,E))\To\derR\sHom(C,R_{\AA^1}(E))\, .$$
Le fait que ce dernier soit un isomorphisme r\'esulte du fait que le morphisme canonique
$\derR\sHom(C,E)\To\derR\sHom(C,R_{\AA^1}(E))$ est une $\AA^1$-\'equivalence,
ce qui se voit de la mani\`ere suivante: l'objet $C$ \'etant compact
dans une cat\'egorie homotopique stable,
le foncteur $\derR\sHom(C,-)$ commute aux colimites homotopiques,
et donc ce morphisme est une colimite homotopique
de morphismes de la forme
$$\derR\sHom(C,E)\To\derR\sHom(C,E^{\AA^n_S})=\derR\sHom(C,E)^{\AA^n_S}\, ,$$
lesquels sont tous des $\AA^1$-\'equivalences.
\end{proof}

\section{Repr\'esentabilit\'e de la $K$-th\'eorie invariante par homotopie}

\begin{paragr}\label{Bott}
On note $K$ l'objet de $\ho(\Sp_{S^1}(S))$ repr\'esentant la $K$-th\'eorie
au sens de Thomason et Trobaugh \cite[D\'efinition 3.1]{TT}
(laquelle, d'apr\`es \cite[Proposition 3.10]{TT},
co\"\i ncide avec celle de Quillen pour les sch\'emas admettant une
famille ample de fibr\'es en droites, en particulier, pour les sch\'emas
affines).
On a donc, pour tout $S$-sch\'ema lisse $X$,
un isomorphisme canonique de groupes ab\'eliens
$$\Hom_{\ho(\Sp_{S^1}(S))}(\Sigma^n \Sigma^\infty(X_+),K(X))\simeq K_n(X)$$
(avec $K_n(X)=0$ si $n<0$).
Le spectre de $K$-th\'eorie est un spectre en anneaux (un mono\"\i de commutatif
dans $\ho(\Sp_{S^1}(S))$) via le produit tensoriel (d\'eriv\'e) des complexes parfaits.

On d\'efinit la \emph{$K$-th\'eorie invariante par homotopie na\"\i ve} $\KV$ par
la formule
$$\KV=R_{\AA^1}(K)\, .$$
La structure de spectre en anneaux sur $K$ induit canoniquement une
telle structure sur $\KV$, de telle fa\c con que le morphisme canonique
$K\To \KV$ soit un morphisme de spectres en anneaux.

On choisit une pr\'esentation du groupe multiplicatif
$$\Gm=S\times \mathrm{Spec}\, \mathbf{Z}[t,t^{-1}]\, ,$$
et on note $b\in K_1(\Gm)$ la classe associ\'ee \`a
la section inversible $t$, laquelle
correspond \`a un morphisme
$$b:T=S^1\wedge\Gm\To \derR\Omega^\infty(K)$$
dans $\ho(\esp_\bullet(S))$.

On dispose alors du cup produit par la classe $b$ en $K$-th\'eorie et en
$K$-th\'eorie invariante par homotopie na\"\i ve.
$$b\cup:T\wedge^\derL K\xrightarrow{b\wedge^\derL 1_K} K\wedge^\derL K\xrightarrow{ \ \mu \ } K $$
$$b\cup:T\wedge^\derL \KV\xrightarrow{b\wedge^\derL 1_\KV} \KV\wedge^\derL \KV\xrightarrow{ \ \mu \ } \KV $$
\end{paragr}

\begin{paragr}\label{paragr:discuterelevementdew}
Comme on le voit, on aura donc \`a consid\'erer des couples $(E,w)$, o\`u
$E$ est un pr\'efaisceau de $S^1$-spectres, et o\`u $w:T\wedge^\derL E\To E$
est un morphisme de $\ho(\Sp_{S^1}(S))$. On peut toujours supposer que $E$
est \`a la fois fibrant et cofibrant. Cela implique alors que les
propri\'et\'es suivantes sont v\'erifi\'ees (on rappelle qu'on a rendu $T$ cofibrant
dans la cat\'egorie $\esp_\bullet(S)$ des pr\'efaisceaux simpliciaux point\'es):
\begin{itemize}
\item[(a)] le morphisme canonique $T\wedge^\derL E\To T\wedge E$ est
un isomorphisme dans $\ho(\Sp_{S^1}(S))$ (puisque $E$ est cofibrant);
\item[(b)] le morphisme $w$ se rel\`eve en un morphisme $\underline{w}:T\wedge E\To E$
(car $T\wedge E$ est cofibrant et $E$ est fibrant).
\end{itemize}
Dans la pratique, on notera par abus $\underline{w}=w$, ce qui est justif\'e par
la proposition \ref{prop:independancedurelevement} et la remarque \ref{rem:independancedurelevement}
ci-dessous.

Si $E$ et $E'$ sont deux pr\'efaisceaux de $S^1$-spectres chacun munis de morphismes
$w:T\wedge E\To E$ et $w':T\wedge E'\To E'$, un morphisme de pr\'efaisceaux de $S^1$-spectres
$f:E\To E'$ sera dit \emph{$T$-\'equivariant} si le diagramme suivant commute.
$$\xymatrix{
T\wedge E\ar[r]^w\ar[d]_{T\wedge f}&E\ar[d]^{f}\\
T\wedge E'\ar[r]^{w'}&E'}$$
\end{paragr}

\begin{prop}\label{prop:independancedurelevement}
Soit $E$ un pr\'efaisceau de $S^1$-spectres \`a la fois fibrant et cofib\-rant,
et soient $w_i:T\wedge E\To E$, $i=0,1$,
deux morphismes de pr\'efaisceaux de $S^1$-spectres
tels que $w_0=w_1$ dans $\ho(\Sp_{S^1}(S))$. Alors il existe un
pr\'efaisceau de $S^1$-spectres cofibrant $E'$, un morphisme
de pr\'efaisceaux $w':T\wedge E'\To E'$, et deux \'equivalences faibles
$T$-\'equivariantes $f_i:E\To E'$, pour $E$ muni de $w_i$, $i=0,1$.
\end{prop}

\begin{proof}
Notons $\Delta^1$ l'intervalle simplicial (vu comme un pr\'e\-fais\-ceau constant),
et posons $I=\Sigma^\infty(\Delta^1_+)$. La diagonale de $\Delta^1$ et
le morphisme $\Delta^1\To\Delta^0$ induisent une structure d'alg\`ebre
de Hopf sur $I$; on notera ici $\Delta:I\To I\wedge I$ la comultiplication,
et $\eta:I\To \Sigma^\infty(S_+)$ la co-unit\'e. Les inclusions $\{i\}\subset\Delta^1$,
$i=0,1$, induisent des cofibrations triviales $d_i:\Sigma^\infty(S_+)\To I$, $i=0,1$, de sorte que
$\eta d_i=1_I$. En particulier, comme $E$ est cofibrant, $T\wedge E\wedge I$ est un
objet cylindre de $T\wedge E$ au sens des cat\'egories de mod\`eles. Comme $E$ est
aussi fibrant, on en d\'eduit qu'il existe un morphisme
$$h:T\wedge E\wedge I\To E$$
tel que $h(1_E\wedge d_i)=w_i$ pour $i=0,1$. On pose $E'=E\wedge I$, et on d\'efinit
$w'$ comme le compos\'e
$$T\wedge E\wedge I\xrightarrow{T\wedge 1_E\wedge \Delta}T\wedge E\wedge I\wedge I
\xrightarrow{h\wedge 1_I}E\wedge I\, .$$
On d\'efinit enfin $f_i=1_E\wedge d_i:E\To E\wedge I$ pour $i=0,1$, et on v\'erifie
aussit\^ot que cela donne la construction voulue.
\end{proof}

\begin{rem}\label{rem:independancedurelevement}
Pour tout morphisme de pr\'efaisceaux de $S^1$-spectres $w:T\wedge E\To E$
et pour toute cofibration triviale de but fibrant $i:E\To E'$, comme
$T\wedge i:T\wedge E\To T\wedge E'$ est toujours une cofibration triviale,
on peut toujours trouver $w':T\wedge E'\To E'$ de sorte que $i$ soit un
morphisme $T$-\'equivariant.
En particulier, dans la conclusion de la proposition \ref{prop:independancedurelevement},
on peut toujours imposer que $E'$ soit aussi fibrant.

Une variation de l'argument pr\'ec\'edent s'applique \`a la cat\'egorie
de mod\`eles obtenue comme la localisation de Bousfield \`a gauche
$L_{\AA^1}\Sp_{S^1}(S)$ de $\Sp_{S^1}(S)$ par les morphismes de la forme
$$\Sigma^n\Sigma^\infty(\AA^1\times X_+)\To\Sigma^n\Sigma^\infty(X_+)$$
pour $X$ lisse sur $S$ et $n$ un entier (les \'equivalences faibles
correspondantes sont alors les morphismes induisant un isomorphisme
dans $\ho_{\AA^1}(\Sp_{S^1}(S))$). On en d\'eduit que, pour tout
pr\'efaisceau de $S^1$-spectres $E$ muni d'un morphisme $w:T\wedge E\To E$,
le morphisme canonique $E\To R_{\AA^1}(E)$ peut \^etre r\'ealis\'e comme
une cofibration triviale de but fibrant dans $L_{\AA^1}\Sp_{S^1}(S)$,
de sorte qu'il existe un morphisme $w_{\AA^1}:T\wedge R_{\AA^1}(E)\To R_{\AA^1}(E)$
faisant de $E\To R_{\AA^1}(E)$ un morphisme $T$-\'equivariant.

De m\^eme, pour toute fibration triviale $p:E'\To E$ de source cofibrante,
comme $T\wedge E'$ est encore cofibrant, on voit
qu'il existe un morphisme $w':T\wedge E'\To E'$ tel que le morphisme $p$ soit
$T$-\'equivariant.
\end{rem}

\begin{paragr}\label{consTT}
Consid\'erons \`a pr\'esent un objet $E$ dans $\Sp_{S^1}(S)$, muni
d'un morphisme $w:T\wedge E\To E$. On notera par abus encore $w:T\wedge^\derL E\To E$
le morphisme induit par le morphisme canonique $T\wedge^\derL E\To T\wedge E$.
\`A une telle donn\'ee, nous allons associer deux nouveaux spectres, not\'es
respectivement $E^\TT$ et $E^\sharp$, et nous verrons ensuite
qu'ils sont $\AA^1$-\'equivalents; voir le corollaire \ref{A1invTT3}.
Ces constructions ont lieu dans la cat\'egorie de mod\`eles des
pr\'efaisceaux de $S^1$-spectres (en particulier, on utilisera
un certain nombre de fois des remplacements fibrants
et des remplacements cofibrants). On n'exprimera cependant ces constructions
que dans le language des foncteurs d\'eriv\'es laissant au lecteur le soin
de v\'erifier qu'une utilisation r\'ep\'et\'ee de la remarque \ref{rem:independancedurelevement}
donne bien un sens aux objets consid\'er\'es.

On commence par la construction de Bass-Thomason-Trobaugh $E^\TT$.
Nous allons cons\-truire par r\'ecurrence une famille de morphismes
$$E=F_0\To F_{-1}\To \cdots\To F_k\To F_{k-1}\To\cdots\, .$$
Pour un objet $C$ de $\ho(\Sp_{S^1}(S))$, on note $V(C)$
l'objet d\'efini par le carr\'e homotopiquement cocart\'esien suivant.
$$\xymatrix{
C\ar[r]\ar[d]&\derR\sHom(\Sigma^\infty(\AA^1_{S\, +}),C)\ar[d]\\
\derR\sHom(\Sigma^\infty(\AA^1_{S\, +}),C)\ar[r]& V(C)
}$$
Par fonctorialit\'e, le carr\'e commutatif
$$\xymatrix{
\Gm\ar[r]\ar[d]&\AA^1_S\ar[d]\\
\AA^1_S\ar[r]& S
}$$
(o\`u l'une des inclusions $\Gm\subset\AA^1$ correspond \`a la pr\'esentation
$\AA^1=\mathrm{Spec}(\ZZ[t])$, et l'autre \`a la pr\'esentation
$\AA^1=\mathrm{Spec}(\ZZ[t^{-1}])$)
induit un carr\'e commutatif
$$\xymatrix{
C\ar[r]\ar[d]&\derR\sHom(\Sigma^\infty(\AA^1_{S\, +}),C)\ar[d]\\
\derR\sHom(\Sigma^\infty(\AA^1_{S\, +}),C)\ar[r]& \derR\sHom(\Sigma^\infty(\mathbf{G}_{m +}),C)
}$$
et par l\`a, un morphisme canonique
$$V(C)\To\derR\sHom(\Sigma^\infty(\mathbf{G}_{m +}),C)\, .$$
On note $U(C)$ la fibre homotopique de ce dernier, ce qui donne, par d\'efinition, un
triangle distingu\'e canonique
$$U(C)\To V(C)\To\derR\sHom(\Sigma^\infty(\mathbf{G}_{m +}),C)\To \Sigma U(C) \, .$$
Si en outre on dispose d'un morphisme
$w:T\wedge^\derL C\To C$, alors, pour tout objet $A$ de $\ho(\Sp_{S^1}(S))$, on
a un morphisme canonique
$$T\wedge^\derL\derR\sHom(A,C)\To\derR\sHom(A,C)$$
correspondant par adjonction \`a l'image par le foncteur $\derR\sHom(A,-)$
du morphisme $C\To\derR\sHom(T,C)$ induit par $w$.
$$\derR\sHom(A,C)\To\derR\sHom(A,\derR\sHom(T,C))\simeq\derR\sHom(T\wedge^\derL A,C)$$
Cette construction \'etant fonctorielle en $A$ (et le smash produit par $T$ commutant aux
colimites homotopiques), on en d\'eduit alors des morphismes naturels
$$T\wedge^\derL V(C)\To V(C) \quad \text{et}\quad T\wedge^\derL U(C)\To U(C)\, .$$
Le morphisme $w$ permet en outre de produire un morphisme canonique
$$C\To U(C)$$
obtenu comme le morphisme compos\'e
$$C \To\derR\sHom(T,C)
\subset\Sigma^{-1}(\derR\sHom(\Sigma^\infty(\mathbf{G}_{m +}),C))
\To \Sigma^{-1}\Sigma U(C)=U(C)$$
(o\`u $C\To\derR\sHom(T,C)$ est obtenu par transposition de $w$, et o\`u
l'inclusion d\'esigne le morphisme induit par la d\'ecomposition
de $\Sigma^\infty(\mathbf{G}_{m +})$ en facteurs directs $\Sigma^\infty(\mathbf{G}_{m +})=
\Sigma^\infty(S_{+})\vee \Sigma^{-1}(T)$, obtenue via la section unit\'e de $\mathbf{G}_m$).
Nous pouvons alors d\'efinir $F_k$ par la formule
$$F_{k-1}=U(F_k)\, .$$
On d\'efinit enfin la construction de Bass-Thomason-Trobaugh $E^\TT$ par la
formule
$$E^\TT=\derL\underset{n\geq 0}{\varinjlim} F_{-n}\, .$$
On a alors, par construction, un morphisme canonique
$$E\To E^\TT\, .$$
La construction de $E^{\sharp}$ est plus directe.
Le foncteur $T\wedge(-)$ induit des morphismes canoniques
$$\derR\sHom(T^{\wedge n},E)\To\derR\sHom(T^{\wedge n+1},T\wedge^\derL E)\, ,$$
et le morphisme $w$ induit des morphismes \'evidents
$$w_*:\derR\sHom(T^{\wedge n+1},T\wedge^\derL E)\To\derR\sHom(T^{\wedge n+1}, E)\, .$$
On obtient donc une suite de morphismes
$$E\To\derR\sHom(T, E)\To\cdots\To\derR\sHom(T^{\wedge n}, E)
\To \derR\sHom(T^{\wedge n+1}, E)\To\cdots$$
On pose enfin
$$E^\sharp=\derL\underset{n\geq 0}{\varinjlim}\, \derR\sHom(T^{\wedge n}, E)\, .$$
On dispose aussi, par construction, d'un morphisme canonique
$$E\To E^\sharp\, .$$
\end{paragr}

\begin{paragr}
Si on consid\`ere le couple  $(K,b)$ correspondant \`a la
$K$-th\'eorie (voir \ref{Bott}), la construction de Bass-Thomason-Trobaugh
nous donne, de mani\`ere tautologique (en regard de la construction
donn\'ee dans \cite[preuve du lemme 6.3]{TT}), le r\'esultat de repr\'esentabilit\'e suivant.
\end{paragr}

\begin{prop}\label{repTTpref}
Le spectre $K^\TT$ repr\'esente la $K$-th\'eorie de Bass-Tho\-ma\-son-Tro\-baugh.
Autrement dit, pour tout $S$-sch\'ema lisse $X$, et pour tout entier $n$,
on a un isomorphisme de groupes ab\'eliens
$$\Hom_{\ho(\Sp_{S^1}(S))}(\Sigma^n\Sigma^\infty(X_+),K^\TT)\simeq K^\TT_n(X)\, ,$$
o\`u $K^\TT_n(X)$ d\'esigne le $n$-\`eme groupe de $K$-th\'eorie de Bass au sens
de Thomason et Trobaugh~\cite[D\'efinition 6.4]{TT}.
\end{prop}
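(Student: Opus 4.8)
The plan is to show that the construction $E\mapsto E^\TT$ of \ref{consTT}, specialized to $E=K$, is simply a reformulation, at the level of presheaves of $S^1$-spectra, of the Bass delooping construction used by Thomason and Trobaugh in \cite[preuve du lemme 6.3]{TT}; the dictionary is provided by the representability of Thomason--Trobaugh $K$-theory recalled in \ref{Bott}. First I would record that, for every presheaf of $S^1$-spectra $C$, every smooth $S$-scheme $X$ and every integer $m$, adjunction together with the isomorphism $\Sigma^\infty((X\times_S Y)_+)\simeq\Sigma^\infty(X_+)\wedge^\derL\Sigma^\infty(Y_+)$ yields canonical isomorphisms
$$\Hom_{\ho(\Sp_{S^1}(S))}(\Sigma^m\Sigma^\infty(X_+),\derR\sHom(\Sigma^\infty(\AA^1_{S\,+}),C))\simeq H^{-m}(C(X\times_S\AA^1_S))$$
and
$$\Hom_{\ho(\Sp_{S^1}(S))}(\Sigma^m\Sigma^\infty(X_+),\derR\sHom(\Sigma^\infty(\Gm_{+}),C))\simeq H^{-m}(C(X\times_S\Gm))\, .$$
Applying this with $C=K$, and then inductively with $C=F_{-k}$, identifies $\derR\sHom(\Sigma^\infty(\AA^1_{S\,+}),C)$ and $\derR\sHom(\Sigma^\infty(\Gm_{+}),C)$ with the presheaves $X\mapsto K(X[t])$ and $X\mapsto K(X[t,t^{-1}])$ (writing $X[t]=X\times_S\AA^1_S$ and $X[t,t^{-1}]=X\times_S\Gm$). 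Consequently $V(K)(X)$ becomes the homotopy pushout of two copies of $K(X[t])$ along $K(X)$, and $U(K)(X)$ the homotopy fibre of the induced map $V(K)(X)\To K(X[t,t^{-1}])$ --- which is exactly one step of the recursive Bass delooping of $K$-theory.

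Next I would verify that the canonical morphism $K\To U(K)$ of \ref{consTT}, built from the cup product with the class $b\in K_1(\Gm)$ of the invertible section $t$, coincides with the structural delooping map of \cite{TT}: this is the one place where the specific choice $b=[t]$ enters, since it is precisely this class that splits off the direct summand $\Sigma K(X)$ of $\Sigma^{-1}K(X[t,t^{-1}])$ that appears in the Bass fundamental theorem (equivalently, in the projective bundle formula) on which \cite[preuve du lemme 6.3]{TT} rests. Granting this, iteration shows that $F_{-k}(X)$ represents the $k$-th partial Bass delooping of the connective $K$-theory of $X$. Finally, since $\Sigma^\infty(X_+)$ is compact in $\ho(\Sp_{S^1}(S))$, one has
$$\Hom_{\ho(\Sp_{S^1}(S))}(\Sigma^n\Sigma^\infty(X_+),K^\TT)\simeq\varinjlim_k\,H^{-n}(F_{-k}(X))\, ,$$
and the description of the tower $(F_{-k}(X))_{k\geq 0}$ identifies this colimit with the $n$-th Bass $K$-theory group $K^\TT_n(X)$ in the sense of \cite[Définition 6.4]{TT}, which is the assertion.

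I expect the only genuine point to be the compatibility --- asserted above --- of the natural transformation $C\To U(C)$ with the structure maps of the Thomason--Trobaugh delooping; everything else is a routine unwinding of adjunctions and of the definition of $E^\TT$. Indeed the construction of \ref{consTT} has been arranged precisely so that this compatibility holds on the nose, which is why the proposition is essentially tautological once that construction is in place.
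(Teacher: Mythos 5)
Your proposal is correct and is essentially the paper's own argument: the paper gives no proof beyond declaring the result tautological in view of the construction of \ref{consTT}, which is designed to reproduce objectwise the Bass delooping of \cite[preuve du lemme 6.3]{TT}, and your unwinding (adjunction identifying $\derR\sHom(\Sigma^\infty(\AA^1_{S\,+}),C)$ and $\derR\sHom(\Sigma^\infty(\mathbf{G}_{m\,+}),C)$ evaluated at $X$ with $C(X[t])$ and $C(X[t,t^{-1}])$, plus compactness of $\Sigma^\infty(X_+)$ to pass to the colimit) is exactly that tautology made explicit. Only a harmless slip: the summand split off by $b$ is $\Sigma K(X)$ inside $K(X[t,t^{-1}])$ (equivalently $K(X)$ inside $\Sigma^{-1}K(X[t,t^{-1}])$), not ``$\Sigma K(X)$ inside $\Sigma^{-1}K(X[t,t^{-1}])$''.
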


\begin{paragr}
Le spectre de \emph{$K$-th\'eorie invariante par homotopie} (au sens de Weibel~\cite{Wei1,TT})
est, par d\'efinition:
$$\KH=R_{\AA^1}(K^\TT)\, .$$
On a donc, pour tout $S$-sch\'ema lisse $X$, et tout entier $n$, un isomorphisme
de groupes
$$\Hom_{\ho(\Sp_{S^1}(S))}(\Sigma^n \Sigma^\infty(X_+),\KH)\simeq\KH_n(X)\, .$$
Afin de comprendre la $K$-th\'eorie invariante par homotopie
au sein de la th\'eorie de l'homotopie des sch\'emas, nous allons
comparer le spectre $\KH$ et le spectre $\KV^\sharp$.
\end{paragr}

\begin{prop}\label{A1invTT2}
Soient $E$ et $F$ deux objet de $\Sp_{S^1}(S)$, munis
de morphismes $w:T\wedge E\To E$ et $w':T\wedge F\To F$.
On suppose donn\'e un morphisme $T$-\'equivariant $\varphi:E\To F$.
Si le morphisme $\varphi:E\To F$ est une $\AA^1$-\'equivalence,
alors il en est de m\^eme des morphismes
induits $\varphi^\TT:E^\TT \To F^\TT$ et $\varphi^\sharp: E^\sharp\To F^\sharp$.
\end{prop}

\begin{proof}
Cela r\'esulte imm\'ediatement du lemme \ref{A1eqHomint}.
\end{proof}

\begin{prop}\label{A1invTT}
Sous les hypoth\`eses de \ref{consTT},
si $E$ est invariant par homotopie, alors il en est de m\^eme
de $E^\TT$ et de $E^\sharp$, et on a alors un isomorphisme
canonique
$$E^\TT\simeq E^\sharp\, .$$
\end{prop}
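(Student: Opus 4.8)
The plan is to treat the two constructions separately and then compare them. For $E^\sharp$, recall that $E^\sharp = \derL\varinjlim_n \derR\sHom(T^{\wedge n}, E)$. Since $E$ is invariant par homotopie, so is $\derR\sHom(T^{\wedge n}, E)$: indeed $T = S^1\wedge\Gm$ is a finite smash of compact objects, hence $T^{\wedge n}$ is compact, and Lemme \ref{A1eqHomint} applies, giving $R_{\AA^1}(\derR\sHom(T^{\wedge n}, E)) \simeq \derR\sHom(T^{\wedge n}, R_{\AA^1}(E)) \simeq \derR\sHom(T^{\wedge n}, E)$. Since the class of $\AA^1$-équivalences is stable par colimites homotopiques (the functor $R_{\AA^1}$ being a left adjoint, it commutes with $\derL\varinjlim$), the homotopy colimit $E^\sharp$ is again invariant par homotopie. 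This is the easy half.

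For $E^\TT$, I would argue by induction on $k$ that each $F_{-k}$ is invariant par homotopie, using the inductive formula $F_{-k} = U(F_{-k+1})$. The point is that $U(C)$ is built out of $C$ and of $\derR\sHom(\Sigma^\infty(\AA^1_{S\,+}), C)$ and $\derR\sHom(\Sigma^\infty(\Gm{}_{+}), C)$ by homotopy (co)limits: the square defining $V(C)$ is homotopy cocartesian, the map $V(C)\to \derR\sHom(\Sigma^\infty(\Gm{}_{+}), C)$ has homotopy fibre $U(C)$, and $\Sigma^\infty(\AA^1_{S\,+})$ and $\Sigma^\infty(\Gm{}_{+})$ are compact. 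So Lemme \ref{A1eqHomint} shows that if $C$ is invariant par homotopie then all four corners are, hence so is $V(C)$ (homotopy pushout of invariant spectra is invariant, since the full subcategory of invariant spectra is a localizing triangulated subcategory — it is the essential image of a right adjoint, as recalled in \ref{defA1loc}, or one simply notes $R_{\AA^1}$ is triangulated and commutes with the relevant colimits) and hence so is $U(C)$ as the fibre of a map between invariant spectra. Starting from $F_0 = E$ invariant, we get $F_{-k}$ invariant for all $k\geq 0$, and then $E^\TT = \derL\varinjlim_n F_{-n}$ is invariant by the same stability of $\AA^1$-équivalences under homotopy colimits used above.

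It remains to produce the canonical isomorphism $E^\TT\simeq E^\sharp$ when $E$ is invariant par homotopie. Here I would exploit the fact that, for $C$ invariant par homotopie, the canonical maps $C\to \derR\sHom(\Sigma^\infty(\AA^1_{S\,+}), C)$ are isomorphisms (this is exactly the statement that $C$ is $\AA^1$-invariant, rephrased via $\derR\sHom$; it already appears implicitly in the proof of Lemme \ref{A1eqHomint}, where the projection induces an $\AA^1$-équivalence $C\to C^{\AA^1_S}$, which is an \emph{iso} precisely when $C$ is invariant). Consequently the defining square for $V(C)$ degenerates: both horizontal (and both vertical) arrows are isomorphisms, so $V(C)\simeq C$, and the map $V(C)\to \derR\sHom(\Sigma^\infty(\Gm{}_{+}), C)$ becomes identified with the map $C\to \derR\sHom(\Sigma^\infty(\Gm{}_{+}), C)$ restricting along $\Gm\to S$. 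Using the cofibre sequence $\Sigma^\infty(\Gm{}_{+})\simeq \Sigma^\infty(S_+)\oplus \widetilde{\Sigma}^\infty(\Gm)$ and $\widetilde{\Sigma}^\infty(\Gm)\simeq \Sigma^{-1}T$ (the reduced suspension spectrum of $\Gm$ pointed by $1$, which is a desuspension of the Tate sphere), one identifies the homotopy fibre $U(C)$ with $\derR\sHom(\Sigma^{-1}T, C)\simeq \derR\sHom(T, C)$, compatibly with the transition maps $C\to U(C)$ induced by $\varphi$ on one side and $C\to \derR\sHom(T,C)$ on the other. Iterating, $F_{-n}\simeq \derR\sHom(T^{\wedge n}, E)$ compatibly with the transition maps, and passing to the homotopy colimit gives $E^\TT\simeq E^\sharp$.

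The main obstacle I anticipate is the bookkeeping in this last step: one must check that the splitting $\Sigma^\infty(\Gm{}_{+})\simeq \Sigma^\infty(S_+)\oplus \Sigma^{-1}T$ is compatible with the map out of $V(C)$ in the way that turns $U(C)$ into $\derR\sHom(T,C)$ \emph{with the correct transition map} $C\to U(C)$, i.e. that the zig-zag of natural transformations identifying $F_{-n}$ with $\derR\sHom(T^{\wedge n}, E)$ is compatible with the two different recipes for the structural maps (the one coming from $U(-)$ and the one coming from $\varphi$ via $C\to\derR\sHom(T,C)$). Once the identification $U(C)\simeq\derR\sHom(T,C)$ is set up naturally and shown to intertwine these maps, the rest is a formal colimit argument. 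Everything else — the invariance statements — is a direct application of Lemme \ref{A1eqHomint} together with the stability of invariant spectra (resp.\ of $\AA^1$-équivalences) under the homotopy (co)limits in play.
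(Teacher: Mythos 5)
Your proposal is correct and follows essentially the same route as the paper's own proof: homotopy invariance of $E^\TT$ and $E^\sharp$ by inspection of the constructions, then, for $C$ invariant, the identifications $V(C)\simeq C$ and $U(C)\simeq\derR\sHom(T,C)$ via the splitting $\Sigma^\infty(\mathbf{G}_{m +})\simeq\Sigma^\infty(S_+)\vee\Sigma^{-1}T$, compatibly with the map $C\To U(C)$, applied to $C=\derR\sHom(T^{\wedge n},E)$ and passed to the homotopy colimit. The only cosmetic differences are that the paper uses the general fact that $\derR\sHom(C,F)$ is invariant for arbitrary $C$ once $F$ is (rather than Lemme \ref{A1eqHomint} with compactness), and it simply asserts the transition-map compatibility that you flag as the remaining bookkeeping.
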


\begin{proof}
Les spectres invariants par homotopie forment une
sous-cat\'egorie localisante de $\ho(\Sp_{S^1}(S))$,
et, si un spectre $F$ est invariant par homotopie, il en est
de m\^eme de $\derR\sHom(C,F)$ pour tout objet $C$
de $\ho(\Sp_{S^1}(S))$. On en d\'eduit aussit\^ot, par examen
des constructions de $E^\TT$ et de $E^\sharp$, que
ces derniers sont invariants par homotopie d\`es que c'est le cas
pour $E$.

Si $C$ est invariant par homotopie, on voit imm\'ediatement que
l'objet $V(C)$ construit au num\'ero \ref{consTT} est canoniquement
isomorphe \`a $C$, de sorte que le triangle distingu\'e
$$U(C)\To V(C)\To\derR\sHom(\Sigma^\infty(\mathbf{G}_{m +}),C)\To \Sigma U(C)$$
s'identifie au triangle distingu\'e
$$\derR\sHom(T,C)\To C \To \derR\sHom(\Sigma^\infty(\mathbf{G}_{m +}),C) \To \Sigma \derR\sHom(T,C)$$
(correspondant \`a la d\'ecomposition
$\Sigma^\infty(\mathbf{G}_{m +})=\Sigma^\infty(S_{+})\vee \Sigma^{-1} T$).
Si, en outre, on a un morphisme $T\wedge^\derL C\To C$,
sous ces identifications, le morphisme $C\To U(C)$ n'est autre que le morphisme
$C\To\derR\sHom(T,C)$ induit par adjonction.
En appliquant ce qui pr\'ec\`ede aux objets $C=\derR\sHom(T^{\wedge n},E)$,
on en d\'eduit que les spectres $E^\TT$ et $E^\sharp$ sont canoniquement isomorphes.
\end{proof}

\begin{cor}\label{A1invTT3}
Sous les hypoth\`eses de \ref{consTT}, on a des isomorphismes
canoniques dans $\ho(\Sp_{S^1}(S))$:
$$R_{\AA^1}(E^\TT)\simeq R_{\AA^1}(E)^\TT\simeq
R_{\AA^1}(E)^\sharp\simeq R_{\AA^1}(E^\sharp) \, .$$
\end{cor}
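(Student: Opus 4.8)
The plan is to reduce everything to Propositions~\ref{A1invTT} and~\ref{A1invTT2}, applied to the canonical $\AA^1$-equivalence $\eta\colon E\to R_{\AA^1}(E)$, once we have observed that $\eta$ is compatible with the $T$-module structures in play.

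First I would equip $R_{\AA^1}(E)$ with a canonical morphism $w_R\colon T\wedge^\derL R_{\AA^1}(E)\to R_{\AA^1}(E)$ making the square with horizontal arrows $\eta$ and $1_T\wedge^\derL\eta$, left vertical arrow $w$ and right vertical arrow $w_R$ commute. Since the $\AA^1$-equivalences are stable under derived tensor product, $1_T\wedge^\derL\eta\colon T\wedge^\derL E\to T\wedge^\derL R_{\AA^1}(E)$ is an $\AA^1$-equivalence; as $R_{\AA^1}(E)$ is invariant by homotopy, it is local with respect to all $\AA^1$-equivalences (by the description of $\ho_{\AA^1}(\Sp_{S^1}(S))$ as a Verdier localization recalled in~\ref{defA1loc}), so the composite $T\wedge^\derL E\xrightarrow{w}E\xrightarrow{\eta}R_{\AA^1}(E)$ factors uniquely through $1_T\wedge^\derL\eta$; this defines $w_R$, and the uniqueness also gives the commutativity of the square. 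Thus $(R_{\AA^1}(E),w_R)$ satisfies the hypotheses of~\ref{consTT}, and $\eta$ satisfies those of Proposition~\ref{A1invTT2}.

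By Proposition~\ref{A1invTT2}, the induced morphisms $\eta^\TT\colon E^\TT\to R_{\AA^1}(E)^\TT$ and $\eta^\sharp\colon E^\sharp\to R_{\AA^1}(E)^\sharp$ are then $\AA^1$-equivalences. By Proposition~\ref{A1invTT}, since $R_{\AA^1}(E)$ is invariant by homotopy, the spectra $R_{\AA^1}(E)^\TT$ and $R_{\AA^1}(E)^\sharp$ are invariant by homotopy and there is a canonical isomorphism $R_{\AA^1}(E)^\TT\simeq R_{\AA^1}(E)^\sharp$. Now apply the localization functor $R_{\AA^1}$: it sends $\AA^1$-equivalences to isomorphisms by definition, and the unit $F\to R_{\AA^1}(F)$ is an isomorphism whenever $F$ is invariant by homotopy; hence $R_{\AA^1}(E^\TT)\simeq R_{\AA^1}(R_{\AA^1}(E)^\TT)\simeq R_{\AA^1}(E)^\TT$ and likewise $R_{\AA^1}(E^\sharp)\simeq R_{\AA^1}(E)^\sharp$. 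Combining with the isomorphism $R_{\AA^1}(E)^\TT\simeq R_{\AA^1}(E)^\sharp$ yields the announced chain of canonical isomorphisms.

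I expect the only genuinely non-formal step to be the first one, namely producing $w_R$ and checking that $\eta$ is a morphism of $T$-modules in the homotopy-coherent-enough sense required by Proposition~\ref{A1invTT2}; everything afterwards is a bookkeeping exercise with the universal property of $R_{\AA^1}$. A minor subtlety there is that one must invoke locality of $R_{\AA^1}(E)$ against \emph{all} $\AA^1$-equivalences, not merely those between compact objects, which is precisely what the presentation of $\ho_{\AA^1}(\Sp_{S^1}(S))$ in~\ref{defA1loc} supplies.
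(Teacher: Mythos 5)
Your argument is correct and follows essentially the same route as the paper: reduce to Propositions~\ref{A1invTT} and~\ref{A1invTT2} applied to the unit $E\to R_{\AA^1}(E)$, then use that $R_{\AA^1}$ inverts $\AA^1$-equivalences and is the identity (up to canonical isomorphism) on homotopy invariant spectra. Your explicit construction of the structure map $w_R$ on $R_{\AA^1}(E)$ via locality is a detail the paper leaves implicit (as it does for $\KV$ in \ref{Bott}), but it is the intended justification.
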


\begin{proof}
En vertu de la propositions \ref{A1invTT2} et de la premi\`ere assertion de la
proposition \ref{A1invTT}, le morphisme
$E^\TT\To R_{\AA^1}(E)^\TT$ (resp. $E^\sharp\To R_{\AA^1}(E)^\sharp$)
est une $\AA^1$-\'equivalence dont le but est invariant par homotopie,
et donc son image par le foncteur $R_{\AA^1}$ est un isomorphisme de m\^eme but
(\`a isomorphisme canonique pr\`es).
Ce corollaire r\'esulte donc de l'identification de $R_{\AA^1}(E)^\TT$
et de $R_{\AA^1}(E)^\sharp$, donn\'ee par la seconde assertion de la proposition \ref{A1invTT}.
\end{proof}

\begin{cor}\label{A1invTT4}
Il existe des isomorphismes canoniques $\KH\simeq \KV^\TT\simeq\KV^\sharp$.
\end{cor}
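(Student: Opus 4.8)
The plan is to read off all three objects from the chain of canonical isomorphisms provided by Corollary~\ref{A1invTT3}, applied to the $K$-theory spectrum.

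Concretely, I would apply Corollary~\ref{A1invTT3} to the object $E=K$ of $\ho(\Sp_{S^1}(S))$ equipped with the $T$-module structure $w=b\cup\colon T\wedge^\derL K\To K$ constructed in~\ref{Bott}; this is precisely the datum required in~\ref{consTT}. The corollary then yields canonical isomorphisms
$$R_{\AA^1}(K^\TT)\simeq R_{\AA^1}(K)^\TT\simeq R_{\AA^1}(K)^\sharp$$
in $\ho(\Sp_{S^1}(S))$ (the corollary in fact provides a fourth term $R_{\AA^1}(K^\sharp)$, which we do not need here).

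It remains to match this chain with the notation of the statement. By definition $R_{\AA^1}(K^\TT)=\KH$ and $R_{\AA^1}(K)=\KV$, so the only point to check is that the constructions $(-)^\TT$ and $(-)^\sharp$ occurring in the middle and right-hand terms are taken with respect to the Bott cup product $b\cup\colon T\wedge^\derL\KV\To\KV$ of~\ref{Bott}; that is, that the $T$-module structure induced on $\KV=R_{\AA^1}(K)$ by functoriality of $R_{\AA^1}$ from $w$ agrees with $b\cup$. This holds because the canonical morphism $K\To\KV$ is a morphism of ring spectra through which the Bott cup products on $K$ and on $\KV$ are compatible (both being cup product with the class $b$ of~\ref{Bott}); by the universal property of $E\To R_{\AA^1}(E)$, this compatible $T$-action on $\KV$ must be the one induced functorially from $w$ on $K$, so that $R_{\AA^1}(K)^\TT=\KV^\TT$ and $R_{\AA^1}(K)^\sharp=\KV^\sharp$. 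Combining the isomorphisms above gives $\KH\simeq\KV^\TT\simeq\KV^\sharp$, as required.

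The argument is essentially an exercise in unwinding definitions; the one step deserving attention---and hence the main obstacle, modest as it is---is this last identification of $T$-module structures, which rests on the compatibility of the ring structures on $K$ and on $\KV$ recorded in~\ref{Bott}. Once that is granted, the result is immediate from Corollary~\ref{A1invTT3}.
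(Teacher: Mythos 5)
Your argument is correct and is exactly the intended one: the paper states this corollary without a separate proof, as the immediate specialization of Corollaire~\ref{A1invTT3} to $E=K$ equipped with $w=b\cup$ from~\ref{Bott}, together with the definitions $\KH=R_{\AA^1}(K^\TT)$ and $\KV=R_{\AA^1}(K)$. Your additional check that the $T$-module structure on $\KV$ induced from $K$ coincides with the cup product by $b$ is precisely the compatibility already built into~\ref{Bott} (the ring structure on $\KV$ being induced so that $K\To\KV$ is a morphism of ring spectra), so nothing is missing.
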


\begin{paragr}\label{def:descNis}
On rappelle qu'un pr\'efaisceau de $S^1$-spectres $E$ sur la cat\'egorie des $S$-sch\'emas lisses
a la \emph{propri\'et\'e de descente relativement \`a la topologie de Nisnevich}
si $E(\varnothing)\simeq 0$, et si, pour tout carr\'e cart\'esien de $S$-sch\'emas lisses
$$\xymatrix{
U\times_X V\ar[r]\ar[d]&V\ar[d]^f\\
U\ar[r]_j&X
}$$
avec $j$ une immersion ouverte, et $f$ un morphisme \'etale, induisant
un isomorphisme $f^{-1}(X-U)_{\mathrm{r\acute{e}d}}\simeq (X-U)_{\mathrm{r\acute{e}d}}$,
le carr\'e commutatif
$$\xymatrix{
E(X)\ar[r]\ar[d]& E(V)\ar[d]\\
E(U)\ar[r]& E(U\times_X V)
}$$
est homotopiquement cart\'esien (on rappelle que cette condition
est \'equivalente \`a la propri\'et\'e de descente cohomologique
formul\'ee en terme d'hyper-re\-couv\-re\-ments de Nisnevich; voir \cite{voe2,voe3}).

On v\'erifie facilement que si $E$ v\'erifie la propri\'et\'e de descente relativement \`a la
topologie de Nisnevich, alors il en est de m\^eme de $\derR\sHom(C,E)$
pour tout pr\'efaisceau de $S^1$-spectres $C$ (il suffit de le v\'erifier
dans le cas o\`u $C=\Sigma^\infty(X_+)$ pour $X$ lisse sur $S$).
En outre, les pr\'efaisceaux de $S^1$-spectres
v\'erifiant la propri\'et\'e de descente relativement \`a la
topologie de Nisnevich forment une sous-cat\'egorie localisante de
$\ho(\Sp_{S^1}(S))$. Cela implique que, si $E$ v\'erifie la propri\'et\'e de descente relativement \`a la
topologie de Nisnevich, il en est de m\^eme de $R_{\AA^1}(E)$,
ainsi que, lorsque cela a un sens, de $E^\TT$ et de $E^\sharp$.
\end{paragr}

\begin{cor}\label{NisdescKV}
Le spectre $\KV^\sharp$ v\'erifie la propri\'et\'e de descente relativement \`a la
topologie de Nisnevich.
\end{cor}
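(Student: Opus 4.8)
The plan is to reduce the assertion to the Nisnevich descent property for non-connective $K$-theory --- which belongs to the work of Thomason and Trobaugh --- and then to propagate it through the constructions and identifications already at our disposal.

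First I would observe that, by Proposition \ref{repTTpref}, the presheaf of $S^1$-spectra $K^\TT$ represents Bass--Thomason--Trobaugh $K$-theory; since isomorphisms in $\ho(\Sp_{S^1}(S))$ are precisely the objectwise stable equivalences (as recalled at the beginning of Section~1), the presheaf $K^\TT$ is therefore isomorphic, in $\ho(\Sp_{S^1}(S))$, to the presheaf $X\mapsto K^B(X)$ of non-connective $K$-theory spectra. Thomason and Trobaugh prove that, for every elementary distinguished Nisnevich square of smooth $S$-schemes, this presheaf sends the associated commutative square to a homotopy cartesian one, which is exactly the descent property relative to the Nisnevich topology as formulated just before the statement; since that property only depends on the isomorphism class of a presheaf in $\ho(\Sp_{S^1}(S))$, the spectrum $K^\TT$ itself verifies it. (By contrast, the naive $K$-theory presheaf $K$ does \emph{not} satisfy Nisnevich descent, Mayer--Vietoris already failing in negative degrees; this is what forces the detour through $K^\TT$.)

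Then, by the stability properties recorded immediately before the corollary, the full subcategory of presheaves of $S^1$-spectra satisfying Nisnevich descent is localizing and stable under the functors $\derR\sHom(C,-)$, hence under $R_{\AA^1}$; applying this to $K^\TT$ shows that $\KH=R_{\AA^1}(K^\TT)$ satisfies Nisnevich descent. Finally, Corollary \ref{A1invTT4} provides a canonical isomorphism $\KV^\sharp\simeq\KH$ in $\ho(\Sp_{S^1}(S))$ (equivalently, Corollary \ref{A1invTT3} applied to $E=K$ yields $\KV^\sharp=R_{\AA^1}(K)^\sharp\simeq R_{\AA^1}(K^\TT)$ directly), and, the descent property being invariant under isomorphism in $\ho(\Sp_{S^1}(S))$, it transfers to $\KV^\sharp$. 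The only non-formal ingredient is Thomason and Trobaugh's descent theorem for non-connective $K$-theory, and the one point to be careful about is that it is the Bass-delooped spectrum $K^\TT$, not the naive $K$, that must be fed into $R_{\AA^1}$; once that is granted, everything follows from the stability statements preceding the corollary together with Corollary \ref{A1invTT4}.
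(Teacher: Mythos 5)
Your proof is correct and follows essentially the same route as the paper: Thomason--Trobaugh's results give Nisnevich descent for $K^\TT$, the stability properties of the descent condition recorded before the corollary transfer it to $\KH=R_{\AA^1}(K^\TT)$, and the identification $\KV^\sharp\simeq\KH$ of Corollaire \ref{A1invTT4} concludes. The paper cites the excision and localisation theorems of \cite{TT} (or directly their Th\'eor\`eme 10.8) where you invoke their descent theorem via Proposition \ref{repTTpref}, but this is the same argument.
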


\begin{proof}
En vertu des th\'eor\`emes d'excision et de localisation \cite[7.1 et 7.4]{TT},
on sait d\'ej\`a que le spectre $K^\TT$ (et donc aussi, d'apr\`es
ce qui pr\'ec\`ede, $\KH$) v\'erifie la propri\'et\'e de descente relativement \`a la
topologie de Nisnevich
(on pourrait aussi invoquer directement \cite[Th\'eor\`eme 10.8]{TT}).
Le corollaire r\'esulte donc de l'identification de $\KV^\sharp$
avec $\KH$.
\end{proof}

\begin{paragr}
Soit $\Sp_T\Sp_{S^1}(S)$ la cat\'egorie des $T$-spectres dans la
cat\'egorie des pr\'efaisceaux de $S^1$-spectres $\Sp_{S^1}(S)$.
Les objets de $\Sp_T\Sp_{S^1}(S)$
sont des collections $E=(E_n,\sigma_n)_{n\geq 0}$, o\`u, pour $n\geq 0$,
$E_n$ est un objet de $\Sp_{S^1}(S)$, et $\sigma_n:T\wedge E_n\To E_{n+1}$
est un morphisme de $S^1$-spectres. On d\'efinit, \`a partir de la
structure de cat\'egorie de mod\`eles stable sur $\Sp_{S^1}(S)$, une
structure de cat\'egorie de mod\`eles $T$-stable sur $\Sp_T\Sp_{S^1}(S)$,
de sorte la cat\'egorie homotopique $\ho(\Sp_T\Sp_{S^1}(S))$
est canoniquement munie d'une structure de cat\'egorie triangul\'ee; voir
\cite{hov,ay2}. On note 
$$\Omega^\infty_T:\Sp_T\Sp_{S^1}(S)\To \Sp_{S^1}(S)$$
le foncteur d'\'evaluation en z\'ero $E\longmapsto E_0$. C'est un
foncteur de Quillen \`a droite, et donc, sont adjoint \`a gauche,
$$\Sigma^\infty_T: \Sp_{S^1}(S)\To \Sp_T\Sp_{S^1}(S)$$
est un foncteur de Quillen \`a gauche. On a donc une adjonction
d\'eriv\'ee:
$$\derL\Sigma^\infty_T:\ho(\Sp_{S^1}(S))\rightleftarrows\ho(\Sp_T\Sp_{S^1}(S)):\derR\Omega^\infty_T\, .$$
Par construction de $\ho(\Sp_T\Sp_{S^1}(S))$, le smash produit par $T$ est une \'equi\-va\-lence
de cat\'egories, ce qui donne un sens \`a l'expression $T^{\wedge n}\wedge^\derL E$
pour tout entier $n<0$.
\'Etant donn\'ee une propri\'et\'e $\mathcal{P}$ portant sur les objets de $\ho(\Sp_{S^1}(S))$,
on dira qu'un objet $E$ de $\ho(\Sp_T\Sp_{S^1}(S))$ a la propri\'et\'e
$\mathcal{P}$ si, pour tout entier $n$, le pr\'efaisceau en $S^1$-spectres
$\derR\Omega^\infty_T(T^{\wedge n}\wedge^\derL E)$ a la propri\'et\'e
$\mathcal{P}$ dans $\ho(\Sp_{S^1}(S))$.

On d\'esigne par $\SH(S)$ la sous-cat\'egorie pleine de $\ho(\Sp_T\Sp_{S^1}(S))$
form\'ee des objets v\'erifiant la propri\'et\'e d'invariance par homotopie
ainsi que la propri\'et\'e de descente relativement \`a la topologie de Nisnevich.
Le foncteur d'inclusion
$\SH(S)\To\ho(\Sp_T\Sp_{S^1}(S))$ admet un adjoint \`a gauche que nous noterons
$$\gamma:\ho(\Sp_T\Sp_{S^1}(S))\To\SH(S)\, .$$
On v\'erifie ais\'ement (par comparaison des propri\'et\'es universelles) que
la ca\-t\'e\-go\-rie $\SH(S)$ est canoniquement \'equivalente \`a la cat\'egorie homotopique
stable des sch\'emas construite en termes de $T$-spectres ou encore de $\PP^1$-spectres
dans la litt\'erature~\cite{Jar,riou1,ay2}.
\end{paragr}

\begin{paragr}\label{constrKGL}
Soit $E$ un pr\'efaisceau de $S^1$-spectres sur la cat\'egorie des $S$-sch\'emas lisses,
muni d'un morphisme $w:T\wedge E\To E$. On lui associe un $T$-spectre
$$\underline{E}=(E_n,\sigma_n)_{n\geq 0}$$
en posant $E_n=E$ et $\sigma_n=w$ pour tout $n\geq 0$.
Le morphisme $w$ induit un morphisme
$$\underline{w}:T\wedge^\derL\underline{E}\To\underline{E}$$
dans $\ho(\Sp_T\Sp_{S^1}(S))$, lequel
s'av\`ere \^etre un isomorphisme.
En outre, on a alors un isomorphisme canonique
$$E^\sharp\simeq\derR\Omega^\infty_T(\underline{E})$$
dans la cat\'egorie $\ho(\Sp_{S^1}(S))$ (cela r\'esulte par exemple
de \cite[Propositions 4.6 et 4.7]{hov}, ou bien encore de \cite[Th\'eor\`eme 4.3.61]{ay2}).
Il en d\'ecoule que, \'etant donn\'ee une propri\'et\'e raisonnable $\mathcal{P}$
des objets de $\ho(\Sp_{S^1}(S))$ (par exemple, la propri\'et\'e de descente pour
une topologie $t$, ou bien la propri\'et\'e d'invariance par homotopie), pour que $\underline{E}$
v\'erifie la propri\'et\'e $\mathcal{P}$ en tant qu'objet de $\ho(\Sp_T\Sp_{S^1}(S))$,
il faut et il suffit que $E^\sharp$ v\'erifie la propri\'et\'e $\mathcal{P}$
en tant qu'objet de $\ho(\Sp_{S^1}(S))$.

En consid\'erant le spectre de $K$-th\'eorie (resp. le spectre de $K$-th\'eorie
invariante par homotopie na\"\i ve) muni du cup produit par la classe $b$
(cf. \ref{Bott}), on obtient donc un objet $\underline{K}$ (resp. $\underline{\KV}$)
dans $\ho(\Sp_T\Sp_{S^1}(S))$. On a ainsi des isomorphismes:
$$K^\sharp\simeq\derR\Omega^\infty_T(\underline{K})
\quad\text{et}\quad \KH\simeq\KV^\sharp\simeq\derR\Omega^\infty_T(\underline{\KV})\, .$$
On remarque que, en vertu des corollaires \ref{A1invTT3} et \ref{NisdescKV},
le $T$-spectre $\underline{\KV}$ v\'erifie les propri\'et\'es
de descente relativement \`a la topologie de Nisnevich
et d'invariance par homotopie, et qu'il repr\'esente la $K$-th\'eorie invariante par
homotopie dans $\SH(S)$.

On d\'efinit par ailleurs le $T$-spectre de $K$-th\'eorie $\KGL$ par la
formule
$$\KGL=\gamma(\underline{K})\, .$$
\end{paragr}

\begin{rem}\label{P1spectrekth}
Lorsqu'on applique le foncteur d'espace de lacets infini au pr\'efaisceau
de $K$-th\'eorie, on obtient un pr\'efaisceau de complexes de Kan point\'e
$\derR\Omega^\infty(K)$ sur la cat\'egorie des $S$-sch\'emas lisses, lequel
est le foncteur de $K$-th\'eorie \`a valeurs dans les espaces point\'es
(par opposition aux $S^1$-spectres). Le pr\'efaisceau $\derR\Omega^\infty(K)$
associe \`a un un $S$-sch\'ema $X$ le complexe de Kan point\'e
$\derR\Omega(wS\, \mathit{Perf}(X))$,
correspondant \`a l'espace des lacets de la construction de Waldhausen appliqu\'ee
\`a la cat\'egorie des complexes parfaits sur $X$.
Il r\'esulte du th\'eor\`eme de Gillet-Waldhausen \cite[Th\'eor\`eme 1.11.7]{TT}
que, pour tout $S$-sch\'ema lisse de type fini $X$, le morphisme
canonique de $iS\, \mathit{Vect}(X)$ (la construction de Waldhausen
appliqu\'ee \`a la cat\'egorie exacte des fibr\'es vectoriels sur $X$)
vers $wS\, \mathit{Perf}(X)$ est une \'equivalence faible simpliciale localement
pour la topologie de Zariski (et donc de Nisnevich):
pour avoir une \'equivalence faible globalement,
il suffit que $X$ admette une famille ample de fibr\'es en droites, puisqu'alors
les complexes parfaits sur $X$ s'identifient aux complexes born\'es de $\mathcal{O}_X$-modules
localement libres de rang fini; cf. \cite[Corollaire 3.9]{TT}.
D'autre part, le morphisme canonique de $B(\amalg_{n\geq 0}\BGL_n)$
vers le pr\'efaisceau simplicial
$iS\, \mathit{Vect}$ (correspondant \`a l'inclusion de la cat\'egorie des
fibr\'es vectoriels triviaux dans $\mathit{Vect}$) est lui aussi une
\'equivalence faible simpliciale localement pour la topologie de Zariski. Enfin, en vertu
de \cite[Proposition 3.10, page 139]{MV}, on a une $\AA^1$-\'equivalence
de $\mathbf{Z}\times \BGL_\infty$ vers $\derR\Omega(B(\amalg_{n\geq 0}\BGL_n))$.
Autrement dit, on a un isomorphisme canonique
$$\mathbf{Z}\times \BGL_\infty\simeq \derR\Omega^\infty(K)$$
dans la cat\'egorie homotopique (instable) de Morel et Voevodsky.


Ce qui est d\'esign\'e habituellement comme le $\PP^1$-spectre de $K$-th\'eorie
en th\'eo\-rie de l'homotopie des sch\'emas \cite{riou2,riou3,PPR,SO}
admet la description suivante\footnote{Nous insistons sur le fait que ce $\PP^1$-spectre
de $K$-th\'eorie n'est pas d\'efini en tant qu'il repr\'esente
quoi que ce soit dans la cat\'egorie homotopique stable des sch\'emas: il faut plut\^ot
le voir comme un analogue purement formel du spectre de $K$-th\'eorie topologique en th\'eorie
de l'homotopie des sch\'emas. On verra plus loin que ce spectre repr\'esente en fait
la $K$-th\'eorie invariante par homotopie;
cf. proposition \ref{prop:P1spectrekth} et th\'eor\`eme \ref{thmvoe}.}.
La classe de Bott $\beta=[\mathcal{O}_{\PP^1}]-[\mathcal{O}_{\PP^1}(-1)]$
dans le groupe
$K_0(\PP^1)=\pi_0(\derR\Omega^\infty(K)(\PP^1))$ d\'efinit un morphisme
$\beta:\PP^1\To \derR\Omega^\infty(K)$ dans $\ho(\esp_\bullet(S))$, et induit donc un morphisme
$$\beta:\PP^1\To \derR\Omega^\infty(K)\simeq \mathbf{Z}\times \BGL_\infty$$
dans la cat\'egorie homotopique instable point\'ee $\mathcal{H}_\bullet(S)$.
Le $\PP^1$-spectre de $K$-th\'eorie usuel, que nous noterons ici $\mathcal{K}$, est
le $\PP^1$-spectre p\'eriodique
d\'etermin\'e par le cup produit par la classe $\beta$, c'est-\`a-dire le
$\PP^1$-spectre d\'etermin\'e par la collection de pr\'efaisceaux simpliciaux
$$(\mathbf{Z}\times \BGL_\infty,\mathbf{Z}\times \BGL_\infty,\ldots,\mathbf{Z}\times \BGL_\infty,\ldots)$$
avec
$\beta\cup:\PP^1\wedge (\mathbf{Z}\times \BGL_\infty)\To \mathbf{Z}\times \BGL_\infty$
pour morphismes structuraux (o\`u on a pris implicitement un remplacement fibrant de $\mathbf{Z}\times \BGL_\infty$ et un remplacement cofibrant de $\PP^1$
dans $\esp_\bullet(S)$).

Lorsqu'on travaille localement pour la topologie de Nisnevich (en fait, Zariski suffit)
et modulo $\AA^1$-\'equivalence, on a l'identification $S^1\wedge\Gm\simeq \PP^1$
(o\`u $\PP^1$ est consid\'er\'e comme un espace point\'e).
Cela permet de d\'ecrire la cat\'egorie $\SH(S)$ en termes de $\PP^1$-spectres;
cf. \cite[Th\'eor\`eme 4.3.40]{ay2}. C'est ce qui donne un sens \`a l'\'enonc\'e suivant.
\end{rem}

\begin{prop}\label{prop:P1spectrekth}
L'\'equivalence de cat\'egories entre la cat\'egorie homotopique stable
des $T$-spectres et la cat\'egorie homotopique stable des $\PP^1$-spectres
envoie $\KGL$  sur $\mathcal{K}$.
\end{prop}

\begin{proof}
La cat\'egorie homotopique stable des $\PP^1$-spectres
de pr\'e\-fais\-ceaux
simpliciaux est canoniquement \'equivalente \`a celle des $\PP^1$-spectres
de $S^1$-spectres. Comme $\mathbf{Z}\times \BGL_\infty$ et $\derR\Omega^\infty(K)$
sont $\AA^1$-\'equivalents, cette \'equivalence de cat\'egories identifie le $\PP^1$-spectre $\mathcal{K}$
introduit au num\'ero \ref{P1spectrekth} avec le $\PP^1$-spectre donn\'e par la collection
de $S^1$-spectres $(K,K,\ldots,K,\ldots)$ munie des morphismes structuraux
$\beta\cup:\PP^1\wedge K\To K$ correspondant au cup produit par la classe
$\beta=[\mathcal{O}_{\PP^1}]-[\mathcal{O}_{\PP^1}(-1)]$.
Il suffit donc de voir que l'identification $\PP^1\simeq S^1\wedge\Gm$
identifie (au signe pr\`es) la classe $\beta$ ci-dessus avec la classe $b$
introduite au num\'ero \ref{Bott}. En outre,
il suffit de traiter le cas o\`u $S=\mathrm{Spec}\, \mathbf{Z}$
(par fonctorialit\'e de la $K$-th\'eorie, puisque les deux
classes $b$ et $\beta$ sont bien d\'efinie sur $\mathbf{Z}$).
Or, dans ce cas, la classe $b$ correspond au choix d'un g\'en\'erateur
de la partie libre du groupe ab\'elien
$$
K_1(\mathbf{Z}[t,t^{-1}])
\simeq K_1(\mathbf{Z})\oplus K_0(\mathbf{Z})
\simeq \mathbf{Z}/2\mathbf{Z}\oplus \mathbf{Z} \, ,
$$
et la classe $\beta$ induit un isomorphisme canonique
$$K_0(\PP^1_\mathbf{Z})\simeq K_0(\mathbf{Z})\oplus\beta K_0(\mathbf{Z})\simeq\mathbf{Z}\oplus\mathbf{Z}\, .$$
En outre, il est bien connu que le Bockstein $\partial$ dans la suite exacte fondamentale
$$0\To K_1(\mathbf{Z})\To K_1(\mathbf{Z}[t])\oplus K_1(\mathbf{Z}[t^{-1}])\To K_1(\mathbf{Z}[t,t^{-1}])
\overset{\partial}{\To} K_0(\mathbf{Z})\To 0$$
admet une section induite par la classe $b$.
Comme la suite exacte de Mayer-Vietoris
$$K_1(\mathbf{Z}[t])\oplus K_1(\mathbf{Z}[t^{-1}])\To K_1(\mathbf{Z}[t,t^{-1}])\To K_0(\PP^1_\mathbf{Z})$$
identifie l'image de $K_1(\mathbf{Z}[t,t^{-1}])$ avec $\beta K_0(\mathbf{Z})$,
on en d\'eduit aussit\^ot la proposition.
\end{proof}

\begin{prop}\label{compKGLKV}
Les $T$-spectres $\KGL$ et $\underline{\KV}$
sont canoniquement isomorphes dans $\SH(S)$.
\end{prop}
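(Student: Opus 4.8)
Proposition \ref{compKGLKV} asks us to identify $\KGL = \gamma(\underline K)$ with $\underline{\KV}$ in $\SH(S)$.

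The plan is to exhibit a morphism from $\underline K$ to $\underline{\KV}$ in $\ho(\Sp_T\Sp_{S^1}(S))$ and show that it becomes an isomorphism after applying $\gamma$. First I would note that the canonical morphism of $S^1$-spectra in rings $K\To\KV=R_{\AA^1}(K)$ is compatible with the cup product by the Bott class $b$ (since $K\To\KV$ is a morphism of ring spectra and the cup products on both sides are defined using the \emph{same} class $b\in K_1(\Gm)$ and the respective multiplications $\mu$), so the square in the hypothesis of \ref{A1invTT2} commutes with $E=K$, $F=\KV$, $w=w'=b\cup$. By the construction in \ref{constrKGL} this yields a morphism of $T$-spectra $\underline K\To\underline{\KV}$. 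Applying the localization functor $\gamma$ gives a morphism
$$\KGL=\gamma(\underline K)\To\gamma(\underline{\KV})$$
in $\SH(S)$.

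The second step is to check that $\gamma(\underline{\KV})\simeq\underline{\KV}$, i.e. that $\underline{\KV}$ already lies in $\SH(S)$, so that $\gamma$ acts as the identity on it. This is exactly what was observed at the end of \ref{constrKGL}: by the isomorphism $\KV^\sharp\simeq\derR\Omega^\infty_T(\underline{\KV})$ and the principle recorded there (a reasonable property $\mathcal P$ holds for $\underline E$ in $\ho(\Sp_T\Sp_{S^1}(S))$ if and only if it holds for $E^\sharp$ in $\ho(\Sp_{S^1}(S))$), the $T$-spectre $\underline{\KV}$ satisfies Nisnevich descent (Corollary \ref{NisdescKV}, via $\KV^\sharp\simeq\KH$) and homotopy invariance (Corollary \ref{A1invTT3}, since $R_{\AA^1}(\KV^\sharp)\simeq R_{\AA^1}(\KV)^\sharp\simeq\KV^\sharp$ because $\KV=R_{\AA^1}(K)$ is already homotopy invariant). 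Hence $\underline{\KV}\in\SH(S)$ and the morphism becomes $\KGL\To\underline{\KV}$ in $\SH(S)$.

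It remains to show this morphism is an isomorphism in $\SH(S)$. By definition of $\gamma$ as the left adjoint to the inclusion, it suffices to check that the morphism $\underline K\To\underline{\KV}$ in $\ho(\Sp_T\Sp_{S^1}(S))$ becomes an isomorphism after the appropriate localization; concretely, a morphism in $\SH(S)$ is an isomorphism if and only if it induces isomorphisms on all $\derR\Omega^\infty_T(T^{\wedge n}\wedge^\derL -)$ for $n\leq 0$. On the level of underlying $S^1$-spectra, by \ref{constrKGL} the map $\derR\Omega^\infty_T(\underline K)\To\derR\Omega^\infty_T(\underline{\KV})$ is identified with $K^\sharp\To\KV^\sharp$, and more generally the $n$-th shifted evaluations are computed from the periodicity by $T$. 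Now $K\To\KV=R_{\AA^1}(K)$ is an $\AA^1$-equivalence by construction, so by Proposition \ref{A1invTT2} the induced map $K^\sharp\To\KV^\sharp$ is an $\AA^1$-equivalence; since its target $\KV^\sharp$ is homotopy invariant (and satisfies Nisnevich descent), applying $R_{\AA^1}$ — which is precisely the local model for $\gamma$ on underlying $S^1$-spectra — turns it into an isomorphism. The same argument applies to each shifted piece. Therefore $\gamma(\underline K)\To\gamma(\underline{\KV})=\underline{\KV}$ is an isomorphism, which is the claim.

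The main obstacle, and the point needing the most care, is the bookkeeping of the multiplicative structures: one must be sure that the Bott element used to build $\underline K$ and the one used to build $\underline{\KV}$ are genuinely compatible under $K\To\KV$, so that \ref{A1invTT2} truly applies to the pair $(\underline K,\underline{\KV})$ with the commuting square of cup products — this is where Remark \ref{P1spectrekth} and the identification of $b$ with the class $\beta\in\mathrm{Pic}(\PP^1)$, together with the fact that $K\To\KV$ is a ring map, do the essential work. Once that compatibility is in place, the rest is a formal consequence of \ref{A1invTT2}, Corollary \ref{A1invTT3}, Corollary \ref{NisdescKV}, and the description of $\gamma$ restricted to periodic $T$-spectra as $R_{\AA^1}$ composed with Nisnevich-localization on underlying $S^1$-spectra.
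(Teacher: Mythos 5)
Your construction of the comparison morphism $\underline K\To\underline{\KV}$ (compatibility of the cup product by $b$ with the ring map $K\To\KV$) and your verification that $\underline{\KV}$ already lies in $\SH(S)$ (Nisnevich descente via $\KV^\sharp\simeq\KH$ and \ref{NisdescKV}, invariance par homotopie via \ref{A1invTT3}) agree with the paper. The gap is in the last step. To prove that $\gamma(\underline K)\To\gamma(\underline{\KV})$ is an isomorphism you invoke the (correct) criterion that isomorphisms are detected by the functors $\derR\Omega^\infty_T(T^{\wedge n}\wedge^\derL -)$, but you then compute $\derR\Omega^\infty_T(\gamma(\underline K))$ as $R_{\AA^1}(K^\sharp)$ on the strength of the claim that ``$\gamma$ restricted to periodic $T$-spectra is $R_{\AA^1}$ composed with Nisnevich localization on underlying $S^1$-spectra''. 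That description of $\gamma$ is nowhere established: what \ref{constrKGL} gives is $E^\sharp\simeq\derR\Omega^\infty_T(\underline E)$ \emph{before} applying $\gamma$, and in general $\derR\Omega^\infty_T$ does not commute with the localization $\gamma$ (a priori one must re-stabilize after localizing levelwise). The reason no re-stabilization is needed here is precisely that $\KV^\sharp=\KH$ already satisfies Nisnevich descent and homotopy invariance, i.e.\ that $\gamma(\underline K)\simeq\underline{\KV}$ --- which is the statement you are proving; as written, this step is circular (it is essentially the representability theorem itself).

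What is actually needed --- and what the paper supplies by citing \cite[Lemme 4.3.59]{ay2} --- is that a \emph{termwise} $\AA^1$-equivalence of $T$-spectra is inverted by $\gamma$. This is a statement about the map $\underline K\To\underline{\KV}$, not about the values of $\gamma$ on either side: since $\gamma$ is a left adjoint of the full inclusion $\SH(S)\To\ho(\Sp_T\Sp_{S^1}(S))$, it suffices to check that for every object $Z$ of $\SH(S)$ the induced map $\Hom(\underline{\KV},Z)\To\Hom(\underline K,Z)$ is bijective, which follows by writing a $T$-spectrum as a homotopy colimit of shifted suspension $T$-spectra of its levels and using that each $\derR\Omega^\infty_T(T^{\wedge n}\wedge^\derL Z)$ is $\AA^1$-local. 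With that lemma (or an equivalent argument) in place, your first two paragraphs complete the proof exactly as in the paper; also note that the appeal to Remark \ref{P1spectrekth} for the compatibility of the Bott classes is unnecessary, since both periodizations use the same class $b$ and $K\To\KV$ is a ring map, as you yourself observe.
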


\begin{proof}
Comme $\underline{\KV}$ v\'erifie les propri\'et\'es
de descente relativement \`a la topologie de Nisnevich
et d'invariance par homotopie, on a un isomorphisme canonique
$\gamma(\underline{\KV})\simeq\underline{\KV}$.
Le morphisme $\underline{K}\To\underline{\KV}$ \'etant une $\AA^1$-\'equi\-va\-lence
terme \`a terme, il induit, d'apr\`es \cite[Lemme 4.3.59]{ay2},
un isomorphisme apr\`es application
du foncteur de localisation
$\KGL=\gamma(\underline{K})\simeq\gamma(\underline{\KV})\simeq\underline{\KV}$,
ce qui implique l'assertion.
\end{proof}


\begin{thm}[Voevodsky]\label{thmvoe}
Le $T$-spectre $\KGL$
repr\'esente la $K$-th\'eorie invariante par homotopie dans $\SH(S)$: pour tout $S$-sch\'ema
lisse $X$, et tout entier $n$, on a un isomorphisme de groupes
$$\Hom_{\SH(S)}(\Sigma^n \Sigma^\infty_T(X_+),\KGL)\simeq \KH_n(X)\, .$$
\end{thm}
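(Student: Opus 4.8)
\emph{Esquisse de la preuve.} Le plan est d'assembler les r\'esultats pr\'ec\'edents \`a l'aide des deux adjonctions en jeu. Gr\^ace \`a la proposition~\ref{compKGLKV}, on dispose d'un isomorphisme canonique $\KGL\simeq\underline{\KV}$ dans $\SH(S)$, de sorte qu'il suffit de montrer que $\underline{\KV}$ repr\'esente la $K$-th\'eorie invariante par homotopie dans $\SH(S)$. Or, d'apr\`es les corollaires~\ref{A1invTT3} et~\ref{NisdescKV}, et le dictionnaire du num\'ero~\ref{constrKGL} entre les propri\'et\'es de $E^\sharp$ dans $\ho(\Sp_{S^1}(S))$ et celles de $\underline{E}$ dans $\ho(\Sp_T\Sp_{S^1}(S))$, le $T$-spectre $\underline{\KV}$ v\'erifie \`a la fois la descente relativement \`a la topologie de Nisnevich et l'invariance par homotopie; c'est donc un objet de $\SH(S)$. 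Le foncteur d'inclusion $\SH(S)\To\ho(\Sp_T\Sp_{S^1}(S))$ \'etant pleinement fid\`ele, d'adjoint \`a gauche $\gamma$, on a, pour tout objet $A$ de $\ho(\Sp_T\Sp_{S^1}(S))$, un isomorphisme
$$\Hom_{\SH(S)}(\gamma(A),\underline{\KV})\simeq\Hom_{\ho(\Sp_T\Sp_{S^1}(S))}(A,\underline{\KV})\, .$$

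J'appliquerais ceci \`a $A=\Sigma^n\,\derL\Sigma^\infty_T(\Sigma^\infty(X_+))$, dont l'image par $\gamma$ est, par d\'efinition de la notation, le spectre $\Sigma^n\Sigma^\infty_T(X_+)$ vu dans $\SH(S)$ (le foncteur $\gamma$ \'etant triangul\'e), puis j'invoquerais l'adjonction d\'eriv\'ee $(\derL\Sigma^\infty_T,\derR\Omega^\infty_T)$ pour obtenir
$$\Hom_{\SH(S)}(\Sigma^n\Sigma^\infty_T(X_+),\KGL)\simeq\Hom_{\ho(\Sp_{S^1}(S))}(\Sigma^n\Sigma^\infty(X_+),\derR\Omega^\infty_T(\underline{\KV}))\, .$$
Il reste \`a identifier le membre de droite. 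D'apr\`es~\ref{constrKGL}, on a un isomorphisme canonique $\derR\Omega^\infty_T(\underline{\KV})\simeq\KV^\sharp$, et le corollaire~\ref{A1invTT4} donne $\KV^\sharp\simeq\KH$; le membre de droite est donc $\Hom_{\ho(\Sp_{S^1}(S))}(\Sigma^n\Sigma^\infty(X_+),\KH)$, lequel est canoniquement isomorphe \`a $\KH_n(X)$ par la propri\'et\'e de repr\'esentabilit\'e de $\KH$ rappel\'ee plus haut. La composition de tous ces isomorphismes donne l'\'enonc\'e, de mani\`ere naturelle en $X$.

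Le point d\'elicat n'est pas dans cette suite formelle d'identifications --- tout le travail s\'erieux ayant d\'ej\`a \'et\'e effectu\'e --- mais dans le fait, \'etabli en amont, que $\underline{\KV}$ appartient d\'ej\`a \`a $\SH(S)$: cela repose d'une part sur la descente Nisnevich de $\KV^\sharp\simeq\KH$, elle-m\^eme cons\'equence des th\'eor\`emes d'excision et de localisation de Thomason et Trobaugh (corollaire~\ref{NisdescKV}), et d'autre part sur l'invariance par homotopie de $\KV^\sharp$ (corollaire~\ref{A1invTT3}). Dans l'argument ci-dessus, le seul point sur lequel il faut \^etre vigilant est l'identification du foncteur de suspension infinie \`a valeurs dans $\SH(S)$ avec $\gamma\circ\derL\Sigma^\infty_T\circ\Sigma^\infty$, qui r\'esulte imm\'ediatement de la d\'efinition de $\gamma$ comme adjoint \`a gauche du foncteur d'inclusion.
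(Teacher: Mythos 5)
Votre argument est correct et correspond exactement à celui du texte : la démonstration du théorème y est réduite à la proposition \ref{compKGLKV} et aux identifications du numéro \ref{constrKGL} (avec \ref{A1invTT3}, \ref{A1invTT4} et \ref{NisdescKV} en amont), et votre rédaction ne fait qu'expliciter la suite d'adjonctions ($\gamma$ adjoint à gauche de l'inclusion pleinement fidèle, puis $(\derL\Sigma^\infty_T,\derR\Omega^\infty_T)$) que l'article laisse implicite. Rien à redire.
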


\begin{proof}
Cela d\'ecoule aussit\^ot de la proposition pr\'ec\'edente
et du num\'ero \ref{constrKGL}.
\end{proof}

\begin{rem}
Ce th\'eor\`eme de repr\'esentabilit\'e
permet de d\'ecrire la $K$-th\'eorie invariante par homotopie
comme la th\'eorie cohomologique orient\'ee universelle
avec loi de groupe formelle multiplicative; voir~\cite{SO,PPR}.
Il permet aussi de d\'ecrire la $K$-th\'eorie invariante par homotopie comme
la th\'eorie cohomologique repr\'esent\'ee par le $T$-spectre de Snaith
$\Sigma^\infty_T(\PP^\infty_+)[\beta^{-1}]$ dans $\SH$; voir \cite{SO,GS}.
\end{rem}

\begin{rem}\label{rem:repBGLHS}
Bien que le th\'eor\`eme \ref{thmvoe} montre que le $T$-spectre de $K$-th\'eorie $\KGL$
repr\'esente la $K$-th\'eorie invariante par homotopie dans $\SH(S)$,
lorsque $S$ n'est pas r\'egulier,
nous ne savons rien de ce que l'objet $\mathbf{Z}\times\BGL_\infty$
repr\'esente dans la cat\'egorie homotopique instable $\mathcal{H}(S)$
(on s'attend cependant \`a ce que cela ait un rapport avec la $K$-th\'eorie
de Karoubi-Villamayor).
\end{rem}

\begin{cor}\label{repKTTcarp}
Si $q\geq 1$ est nilpotent dans $\mathcal{O}_S$, alors, pour tout $S$-sch\'ema lisse $X$,
et pour tout entier $n$, on a un isomorphisme de groupes
$$\Hom_{\SH(S)}(\Sigma^n \Sigma^\infty_T(X_+),\KGL)\otimes\mathbf{Z}[1/q]
\simeq K^\TT_n(X)\otimes\mathbf{Z}[1/q]\, .$$
\end{cor}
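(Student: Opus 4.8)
The plan is to deduce the statement from Theorem~\ref{thmvoe} together with the comparison between Bass-Thomason-Trobaugh $K$-theory and homotopy invariant $K$-theory after inverting $q$. By Theorem~\ref{thmvoe} the group $\Hom_{\SH(S)}(\Sigma^n\Sigma^\infty_T(X_+),\KGL)$ is canonically isomorphic to $\KH_n(X)=\pi_n(\KH(X))$, and since $\ZZ[1/q]$ is flat it thus suffices to prove that the canonical map $K^\TT\To\KH=R_{\AA^1}(K^\TT)$ (the universal arrow from $K^\TT$ to a presheaf invariant by homotopy, cf.~\ref{defA1loc}) induces, for every smooth $S$-scheme $X$ and every integer $n$, an isomorphism $K^\TT_n(X)\otimes\ZZ[1/q]\xrightarrow{\ \sim\ }\KH_n(X)\otimes\ZZ[1/q]$.

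First I would reduce this to a statement about the single presheaf $K^\TT\otimes\ZZ[1/q]$. Localization by $q$ is a filtered homotopy colimit (the telescope of the multiplications by $q$), it is computed sectionwise on presheaves of spectra, and the functor $R_{\AA^1}(E)=\derL\varinjlim_n\derR\sHom(\Sigma^\infty(\Delta^n_{S\,+}),E)$ involves only homotopy colimits together with the functors $\derR\sHom(\Sigma^\infty(\Delta^n_{S\,+}),-)$; the latter commute with homotopy colimits, each $\Sigma^\infty(\Delta^n_{S\,+})$ being compact in $\ho(\Sp_{S^1}(S))$, exactly as in the proof of Lemma~\ref{A1eqHomint}. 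Hence $R_{\AA^1}$ commutes with $-\otimes\ZZ[1/q]$, so $\KH\otimes\ZZ[1/q]\simeq R_{\AA^1}(K^\TT\otimes\ZZ[1/q])$, and it is enough to show that $K^\TT\otimes\ZZ[1/q]$ is \emph{already} invariant by homotopy: the canonical map $K^\TT\otimes\ZZ[1/q]\To R_{\AA^1}(K^\TT\otimes\ZZ[1/q])$ is then an isomorphism, and evaluating at $X$ and taking homotopy groups gives the claim.

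It remains to check this homotopy invariance, and this is the only point where the hypothesis on $q$ intervenes. For a smooth $S$-scheme $X$ and an integer $m$, the projection $X\times\AA^1\To X$ induces a monomorphism $K^\TT_m(X)\To K^\TT_m(X\times\AA^1)$ split by the zero section, whose cokernel I denote $NK^\TT_m(X)$; so $K^\TT\otimes\ZZ[1/q]$ is invariant by homotopy in the sense of~\ref{defA1loc} as soon as $NK^\TT_m(X)\otimes\ZZ[1/q]=0$ for every $m\in\ZZ$ and every smooth $S$-scheme $X$. As $q$ is nilpotent in $\mathcal{O}_S$ it is nilpotent in $\mathcal{O}_X$, and here I would invoke the theorem of Weibel that the groups $NK^\TT_*(X)$ carry a natural structure of module over the big Witt vectors of $X$ and are $q$-primary torsion whenever $q$ is nilpotent on $X$ — in other words, the fact recalled in the introduction that non-connective $K$-theory is invariant by homotopy modulo $q$-torsion over such a base. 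I expect this arithmetic input to be the only real obstacle: one must ensure it applies in all weights $m\in\ZZ$, including the negative Bass $K$-groups, and for possibly non-affine $X$, the latter being harmless in view of the Nisnevich descent already established for $K^\TT$. Granting it, the previous steps assemble into the isomorphism $\KH_n(X)\otimes\ZZ[1/q]\simeq K^\TT_n(X)\otimes\ZZ[1/q]$, and Theorem~\ref{thmvoe} concludes.
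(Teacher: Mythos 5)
Votre argument est correct et suit essentiellement la même voie que le texte : la preuve du corollaire consiste à combiner le théorème \ref{thmvoe} avec la comparaison $K^\TT_*(X)\otimes\ZZ[1/q]\simeq\KH_*(X)\otimes\ZZ[1/q]$ lorsque $q$ est nilpotent, laquelle est exactement \cite[Théorème 9.6]{TT}, que le texte se contente de citer. Vos étapes intermédiaires (commutation de $R_{\AA^1}$ avec l'inversion de $q$, annulation de $NK^\TT_m(X)\otimes\ZZ[1/q]$ via la structure de module sur les vecteurs de Witt due à Weibel, passage au cas non affine et aux degrés négatifs par descente) reconstituent précisément la démonstration de Thomason--Trobaugh (9.5--9.6) et peuvent donc être remplacées par cette référence.
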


\begin{proof}
Il s'agit d'une cons\'equence imm\'ediate du th\'eor\`eme
pr\'e\-c\'e\-dent et de \cite[Th\'eor\`eme 9.6]{TT}.
\end{proof}

\section{Descente par \'eclatements abstraits}

\begin{paragr}
En vertu de \cite[Section 4.5]{ay2},
les cat\'egories homotopiques stables $\SH(S)$ forment un
$2$-foncteur homotopique stable $\SH$ au sens de \cite[D\'efinition 1.4.1]{ay1}
(et m\^eme un d\'erivateur alg\'ebrique homotopique stable au sens de \cite[D\'efinition 2.4.13]{ay1}).
\'Etant donn\'e un morphisme de sch\'emas $f:S'\To S$, on a donc un couple de foncteur
adjoints
$$\derL f^*:\SH(S)\rightleftarrows\SH(S'):\derR f_*$$
(avec $\derL f^*$ adjoint \`a gauche de $\derR f_*$).
Le foncteur $\derL f^*$ est essentiellement d\'etermin\'e par le fait
qu'il commute aux colimites homotopiques et qu'il correspond au foncteur
de changement de base par $f$: pour tout $S$-sch\'ema lisse $X$,
en posant $X'=S'\times_S X$, on a:
$$\derL f^*\Sigma^\infty_T(X_+)=\Sigma^\infty_T(X'_+)\, .$$
Lorsque $f$ est en outre lisse, le foncteur $\derL f^*$
a aussi un adjoint \`a gauche
$$\derL f_\sharp:\SH(S')\To\SH(S)$$
essentiellement d\'etermin\'e par le fait que, pour tout
$S'$-sch\'ema lisse $X$, on a
$$\derL f_\sharp\Sigma^\infty_T(X_+)=\Sigma^\infty_T(X_+)\, .$$
Nous utiliserons de mani\`ere essentielle les faits suivants.
\end{paragr}

\begin{thm}[Localisation]\label{loc}
Soit $i:Z\To S$ une immersion ferm\'ee, d'immersion ouverte compl\'ementaire
$j:U\To S$. Pour tout objet $E$ de $\SH(S)$, le carr\'e commutatif
$$\xymatrix{
\derL j_\sharp \derL j^*(E)\ar[r]\ar[d]& E\ar[d]\\
0\ar[r]& \derR i_*\derL i^*(E)
}$$
est homotopiquement cocart\'esien. Autrement dit, on a alors un triangle distingu\'e
canonique
$$\derL j_\sharp \derL j^*(E)\To E\To \derR i_*\derL i^*(E)\To
\Sigma \, \derL j_\sharp \derL j^*(E)\, .$$
En outre, les foncteurs
$$\derL j_\sharp:\SH(U)\To\SH(S)\quad\text{et}\quad
\derR i_*:\SH(Z)\To\SH(S)$$
sont pleinement fid\`eles.

\noindent En particulier, le foncteur
$$(\derL j^*,\derL i^*):\SH(S)\To\SH(U)\times\SH(Z)$$
est conservatif.
\end{thm}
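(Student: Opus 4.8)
Nous allons d\'eduire les trois assertions de la propri\'et\'e de localisation de $\SH$ (au sens d'Ayoub), dont la validit\'e est pr\'ecis\'ement ce qui est \'etabli dans \cite[Section 4.5]{ay2} (voir \cite[Section 1.4]{ay1} pour le formalisme). Le seul ingr\'edient non formel sera le suivant : pour $i$ et $j$ comme dans l'\'enonc\'e, (a) le foncteur $\derR i_*$ est pleinement fid\`ele, c'est-\`a-dire que le morphisme d'adjonction $\derL i^*\derR i_*\To\mathrm{id}$ est inversible, et (b) le couple $(\derL i^*,\derL j^*)$ est conservatif. Tout le reste repose sur des faits standard valables dans n'importe quel $2$-foncteur homotopique stable. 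D'une part, $j$ \'etant une immersion ouverte, on a $U\times_S U\simeq U$, d'o\`u un isomorphisme $\derL j^*\derL j_\sharp\simeq\mathrm{id}$, le morphisme d'adjonction $\mathrm{id}\To\derL j^*\derL j_\sharp$ \'etant inversible ; en particulier $\derL j_\sharp$ est pleinement fid\`ele, et, avec (a), ceci \'etablit la deuxi\`eme assertion. D'autre part, le produit fibr\'e de $i$ et de $j$ \'etant le sch\'ema vide, le changement de base donne $\derL i^*\derL j_\sharp=0$ (puisque $\SH(\emptyset)=0$), d'o\`u l'on tire aussit\^ot, par adjonction, que $\derL j^*\derR i_*=0$.

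Il reste \`a \'etablir le triangle distingu\'e, ou de mani\`ere \'equivalente que le carr\'e de l'\'enonc\'e est homotopiquement cocart\'esien. On note $c:\derL j_\sharp\derL j^*(E)\To E$ le morphisme d'adjonction et on le compl\`ete en un triangle distingu\'e
$$\derL j_\sharp\derL j^*(E)\xrightarrow{\ c\ }E\To C\To\Sigma\,\derL j_\sharp\derL j^*(E)\, .$$
En appliquant $\derL j^*$ et en utilisant l'identit\'e triangulaire de l'adjonction $(\derL j_\sharp,\derL j^*)$, jointe \`a l'inversibilit\'e du morphisme d'adjonction $\mathrm{id}\To\derL j^*\derL j_\sharp$, on voit que $\derL j^*(c)$ est inversible, donc $\derL j^*(C)=0$ ; en appliquant $\derL i^*$ et en utilisant $\derL i^*\derL j_\sharp=0$, on obtient un isomorphisme $\derL i^*(E)\simeq\derL i^*(C)$. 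Consid\'erons alors le morphisme d'adjonction $\eta:C\To\derR i_*\derL i^*(C)$. Apr\`es application de $\derL i^*$, le morphisme $\derL i^*(\eta)$ admet pour r\'etraction le morphisme d'adjonction $\derL i^*\derR i_*\derL i^*(C)\To\derL i^*(C)$, lequel est inversible par (a) : c'est donc un isomorphisme. Apr\`es application de $\derL j^*$, ses deux termes sont nuls (car $\derL j^*(C)=0$ et $\derL j^*\derR i_*=0$) : c'est encore un isomorphisme. D'apr\`es (b), $\eta$ est donc un isomorphisme, de sorte que $C\simeq\derR i_*\derL i^*(C)\simeq\derR i_*\derL i^*(E)$ ; la naturalit\'e du morphisme d'adjonction montre que, sous cette identification, le morphisme $E\To C$ n'est autre que le morphisme d'adjonction $E\To\derR i_*\derL i^*(E)$, ce qui identifie le triangle ci-dessus au triangle de l'\'enonc\'e. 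Enfin, la derni\`ere assertion r\'esulte de (b), ou encore du triangle que l'on vient d'obtenir : si $\derL j^*(E)=0=\derL i^*(E)$, les deux termes extr\^emes du triangle sont nuls, donc $E=0$.

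La difficult\'e est donc enti\`erement concentr\'ee dans l'\'enonc\'e non formel form\'e de (a) et (b), \`a savoir le th\'eor\`eme de localisation pour $\SH$ lui-m\^eme, que nous ne red\'emontrons pas. Sa preuve se ram\`ene, par d\'evissage sur les g\'en\'erateurs $\Sigma^\infty_T(X_+)$ ($X$ lisse sur $S$), \`a analyser la cofibre du morphisme $\Sigma^\infty_T((X\times_S U)_+)\To\Sigma^\infty_T(X_+)$ et \`a montrer qu'elle provient du ferm\'e $Z$ ; les ingr\'edients g\'eom\'etriques essentiels en sont le th\'eor\`eme de puret\'e homotopique de Morel et Voevodsky \cite{MV}, la d\'eformation au c\^one normal, et une r\'ecurrence noeth\'erienne sur $S$. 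Nous renvoyons \`a \cite[Section 4.5]{ay2} pour les d\'etails.
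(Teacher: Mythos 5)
Votre preuve est correcte et suit essentiellement la m\^eme voie que celle du texte, qui se contente de renvoyer \`a \cite[Section 4.5.3]{ay2} : dans les deux cas, tout le contenu non formel (votre \'enonc\'e (a)+(b), c'est-\`a-dire l'axiome de localit\'e pour $\SH$) est d\'el\'egu\'e \`a Ayoub, et votre d\'eduction formelle du triangle distingu\'e, de la pleine fid\'elit\'e de $\derL j_\sharp$ et $\derR i_*$ et de la conservativit\'e \`a partir de cet axiome est exacte (elle figure d'ailleurs d\'ej\`a dans \cite[Section 1.4]{ay1}). Seule remarque cosm\'etique : votre esquisse finale attribue la preuve de la localit\'e \`a la puret\'e et \`a la d\'eformation au c\^one normal, alors que l'ingr\'edient g\'eom\'etrique principal est plut\^ot le lemme de Morel--Voevodsky sur la structure Nisnevich-locale des sch\'emas lisses le long d'un ferm\'e, mais comme vous renvoyez explicitement \`a \cite{ay2} pour ce point, cela ne cr\'ee aucune lacune.
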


\begin{proof}
Voir \cite[Section 4.5.3]{ay2}.
\end{proof}

\begin{cor}\label{red}
Pour tout sch\'ema $X$, l'immersion $i:X_{\mathrm{red}}\To X$
induit une \'equivalence de cat\'egories
$$\derL i^*:\SH(X)\To\SH(X_{\mathrm{red}})\, .$$
\end{cor}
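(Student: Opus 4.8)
Le plan est de d\'eduire ce corollaire directement du th\'eor\`eme de localisation \ref{loc}, le seul ingr\'edient g\'eom\'etrique \'etant que $X_{\mathrm{red}}$ et $X$ ont le m\^eme espace topologique sous-jacent. On commence par observer que l'immersion ferm\'ee $i:X_{\mathrm{red}}\To X$ a pour immersion ouverte compl\'ementaire l'immersion du sch\'ema vide $j:\emptyset\To X$. La cat\'egorie des sch\'emas lisses sur le sch\'ema vide \'etant triviale, on a $\SH(\emptyset)=0$, de sorte que, pour tout objet $E$ de $\SH(X)$, l'objet $\derL j_\sharp\derL j^*(E)$ est nul.

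On applique alors \`a $E$ le triangle distingu\'e de \ref{loc} relatif au couple $(i,j)$\,: l'annulation de $\derL j_\sharp\derL j^*(E)$ force le morphisme canonique (d'unit\'e) $E\To\derR i_*\derL i^*(E)$ \`a \^etre un isomorphisme, et ce fonctoriellement en $E$. Autrement dit, l'unit\'e de l'adjonction $\derL i^*\dashv\derR i_*$ est un isomorphisme, ce qui signifie que le foncteur $\derL i^*:\SH(X)\To\SH(X_{\mathrm{red}})$ est pleinement fid\`ele.

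D'autre part, la derni\`ere partie de l'\'enonc\'e de \ref{loc} affirme que le foncteur $\derR i_*:\SH(X_{\mathrm{red}})\To\SH(X)$ est lui aussi pleinement fid\`ele, c'est-\`a-dire que le morphisme canonique $\derL i^*\derR i_*\To\mathrm{id}$ est un isomorphisme. Une adjonction dont l'unit\'e et la co\"unit\'e sont toutes deux des isomorphismes est une \'equivalence de cat\'egories\,; par cons\'equent $\derL i^*$ est une \'equivalence, de quasi-inverse $\derR i_*$.

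Il n'y a pour ainsi dire pas d'obstacle ici\,: l'argument est purement formel une fois le th\'eor\`eme \ref{loc} acquis. Les seuls points demandant un peu de soin sont l'identification $\SH(\emptyset)=0$ (afin que le terme $\derL j_\sharp\derL j^*$ du triangle de localisation s'annule) et le fait de bien rep\'erer lequel des deux adjoints $\derL i^*$, $\derR i_*$ est montr\'e pleinement fid\`ele par chacune des deux moiti\'es de l'\'enonc\'e de \ref{loc}.
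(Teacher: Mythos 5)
Votre argument est correct et c'est essentiellement celui du texte : le compl\'ementaire ouvert de $i$ est vide, $\SH(\varnothing)\simeq 0$ annule le terme $\derL j_\sharp\derL j^*$ du triangle de localisation, et la pleine fid\'elit\'e de $\derR i_*$ donn\'ee par \ref{loc} fournit l'autre moiti\'e de l'\'equivalence. Vous explicitez simplement les d\'etails formels (unit\'e et co\"unit\'e) que la preuve du papier laisse implicites.
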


\begin{proof}
Cela r\'esulte imm\'ediatement du th\'eor\`eme pr\'ec\'edent,
puis\-que l'ouvert compl\'ementaire de l'immersion ferm\'ee $i$ est
vide (sachant que $\SH(\varnothing)\simeq 0$).
\end{proof}

\begin{thm}[Changement de base lisse]\label{lisse}
Pour tout carr\'e cart\'esien dans la cat\'egorie des
sch\'emas,
$$\xymatrix{
X'\ar[r]^u \ar[d]_{q}&X\ar[d]^p\\
Y'\ar[r]_v&Y
}$$
si le morphisme $v$ est lisse, alors, pour tout objet $E$ de $\SH(Y')$,
le morphisme canonique
$$\derL u_\sharp \, \derL q^*(E)\To\derL p^* \, \derL v_\sharp(E)$$
est un isomorphisme dans $\SH(X)$.

\noindent Par transposition, pour tout objet $E$ de $\SH(X)$,
le morphisme canonique
$$\derL v^*\, \derR p_*(E)\To\derR q_* \, \derL u^*(E)$$
est un isomorphisme dans $\SH(Y')$.
\end{thm}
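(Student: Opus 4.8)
Le plan est d'\'etablir le premier isomorphisme, puis d'en d\'eduire formellement le second en passant aux adjoints \`a droite. Pour la premi\`ere assertion, on part de l'observation que chacun des foncteurs $\derL q^*$, $\derL u_\sharp$, $\derL v_\sharp$ et $\derL p^*$ commute aux petites sommes directes, \`a la suspension et \`a l'\'equivalence $T\wedge^\derL-$ (ce sont des adjoints \`a gauche de foncteurs triangul\'es, la derni\`ere commutation faisant partie de la structure de $2$-foncteur homotopique stable rappel\'ee plus haut, jointe au fait que $T$ est pr\'eserv\'e par changement de base). Par cons\'equent, la sous-cat\'egorie pleine de $\SH(Y')$ form\'ee des objets $E$ pour lesquels le morphisme d'\'echange $\derL u_\sharp\,\derL q^*(E)\To\derL p^*\,\derL v_\sharp(E)$ est inversible est localisante et stable par $T\wedge^\derL-$; comme $\SH(Y')$ est engendr\'ee, sous petites sommes, triangles et (d\'e)suspensions par $T$, par les objets $\Sigma^\infty_T(W_+)$ avec $W$ lisse sur $Y'$, il suffit de traiter le cas $E=\Sigma^\infty_T(W_+)$.

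Fixons donc $W$ lisse sur $Y'$ et posons $W'=X'\times_{Y'}W$. Comme $v$ est lisse, $W$ est lisse sur $Y$ et $u$ est lisse (en tant que changement de base de $v$), de sorte que $W'$ est lisse sur $X'$, donc sur $X$ via $u$. En utilisant les formules rappel\'ees plus haut pour l'action de $\derL v_\sharp$, $\derL p^*$, $\derL q^*$ et $\derL u_\sharp$ sur les g\'en\'erateurs, on obtient $\derL p^*\,\derL v_\sharp\,\Sigma^\infty_T(W_+)\simeq\Sigma^\infty_T\big((X\times_Y W)_+\big)$ d'une part et $\derL u_\sharp\,\derL q^*\,\Sigma^\infty_T(W_+)\simeq\Sigma^\infty_T(W'_+)$ d'autre part, et l'identification canonique $W'=X'\times_{Y'}W=(X\times_Y Y')\times_{Y'}W\simeq X\times_Y W$ identifie les deux membres. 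Il reste \`a v\'erifier que cette identification est bien le morphisme d'\'echange; on peut le faire en d\'eroulant la construction de la $2$-cellule d'\'echange \`a partir des unit\'es et co\"\i nit\'es des adjonctions $\derL v_\sharp\dashv\derL v^*$ et $\derL u_\sharp\dashv\derL u^*$. Plus \'economiquement, on remarque que tout provient du niveau des pr\'efaisceaux simpliciaux: $\derL u_\sharp$, $\derL q^*$, $\derL v_\sharp$ et $\derL p^*$ y sont les d\'eriv\'es \`a gauche de foncteurs de Quillen \`a gauche, et les foncteurs \emph{non d\'eriv\'es} $u_\sharp q^*$ et $p^* v_\sharp$ sont canoniquement isomorphes (ils envoient tous deux un $Y'$-sch\'ema lisse $W$ sur le pr\'efaisceau repr\'esent\'e par $X\times_Y W\simeq X'\times_{Y'}W$), d'o\`u un isomorphisme canonique des foncteurs d\'eriv\'es correspondants.

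Pour la seconde assertion, on passe simplement aux adjoints \`a droite. \'Etant des compos\'es d'adjoints \`a gauche, $\derL u_\sharp\,\derL q^*$ admet pour adjoint \`a droite $\derR q_*\,\derL u^*$, et $\derL p^*\,\derL v_\sharp$ admet pour adjoint \`a droite $\derL v^*\,\derR p_*$; le transpos\'e du morphisme d'\'echange de la premi\`ere partie est exactement le morphisme canonique $\derL v^*\,\derR p_*\To\derR q_*\,\derL u^*$, et le transpos\'e d'un isomorphisme naturel entre adjoints \`a gauche est un isomorphisme naturel entre leurs adjoints \`a droite, d'o\`u l'assertion. L'obstacle principal est la v\'erification de nature mod\`ele-cat\'egorique invoqu\'ee ci-dessus: le fait que les foncteurs en jeu soient bien de Quillen \`a gauche pour les structures projectives Nisnevich-locales et $\AA^1$-locales ainsi que pour leur $T$-stabilisation, et que l'isomorphisme entre les foncteurs non d\'eriv\'es soit compatible aux $2$-cellules de coh\'erence d\'efinissant la transformation d'\'echange — tout cela \'etant \'etabli dans \cite[Section 4.5]{ay2}.
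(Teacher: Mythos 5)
Votre preuve est correcte, mais il faut noter que la d\'emonstration du texte se r\'eduit \`a un renvoi \`a \cite[Proposition 4.5.48]{ay2} : vous explicitez donc l'argument qui se cache derri\`ere la citation, et c'est essentiellement le bon. Deux remarques de comparaison. D'abord, votre r\'eduction aux g\'en\'erateurs $\Sigma^\infty_T(W_+)$ est en fait superflue : votre second argument (les foncteurs non d\'eriv\'es $u_\sharp q^*$ et $p^*v_\sharp$ sont canoniquement isomorphes, car tous deux commutent aux colimites et co\"\i ncident sur les repr\'esentables via $X'\times_{Y'}W\simeq X\times_Y W$, et ce sont des foncteurs de Quillen \`a gauche pour les structures localis\'ees puis $T$-stabilis\'ees) donne directement l'isomorphisme des foncteurs d\'eriv\'es sur \emph{tous} les objets, sans passer par la sous-cat\'egorie localisante ; c'est d'ailleurs le point d\'elicat de la premi\`ere variante --- identifier le morphisme d'\'echange avec l'identification \'evidente sur les g\'en\'erateurs --- que cette seconde variante r\`egle proprement. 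Ensuite, la charge de v\'erification que vous laissez en suspens (le caract\`ere de Quillen \`a gauche de $u_\sharp$, $q^*$, $v_\sharp$, $p^*$ pour les structures Nisnevich-locales, $\AA^1$-locales et $T$-stables, et la compatibilit\'e de l'isomorphisme non d\'eriv\'e avec la $2$-cellule d'\'echange) est pr\'ecis\'ement le contenu de \cite[Section 4.5]{ay2} ; votre r\'edaction n'est donc pas plus autonome que la citation du texte, mais elle a le m\'erite d'expliquer pourquoi l'\'enonc\'e est vrai. Le passage \`a la seconde assertion par transposition (le transpos\'e d'un isomorphisme entre adjoints \`a gauche compos\'es est un isomorphisme entre les adjoints \`a droite compos\'es, dans le bon sens $\derL v^*\,\derR p_*\To\derR q_*\,\derL u^*$) est correct et correspond exactement au <<~par transposition~>> de l'\'enonc\'e.
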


\begin{proof}
Voir \cite[Proposition 4.5.48]{ay2}.
\end{proof}

\begin{thm}[Changement de base propre]\label{propre}
Pour tout carr\'e cart\'esien de sch\'emas,
$$\xymatrix{
X'\ar[r]^u \ar[d]_{q}&X\ar[d]^p\\
Y'\ar[r]_v&Y
}$$
si le morphisme $p$ est propre, alors, pour tout objet $E$ de $\SH(X)$,
le morphisme canonique
$$\derL v^*\, \derR p_*(E)\To\derR q_* \, \derL u^*(E)$$
est un isomorphisme dans $\SH(Y')$.
\end{thm}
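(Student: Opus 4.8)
Le plan est de d\'eduire cet \'enonc\'e (d\^u \`a J.~Ayoub \cite{ay1}, et l\'eg\`erement g\'en\'eralis\'e dans \cite{CD3}) d'un d\'evissage dont nous rappelons la structure. On observerait d'abord deux choses: la classe des morphismes $p$ pour lesquels l'\'enonc\'e est vrai (pour tout carr\'e cart\'esien de base $v$ et tout objet $E$ de $\SH(X)$) est stable par composition, et, d'apr\`es le th\'eor\`eme \ref{loc}, le foncteur $(\derL j^*,\derL i^*)$ attach\'e \`a une immersion ferm\'ee et \`a son ouvert compl\'ementaire est conservatif. Il suffirait donc de traiter successivement les immersions ferm\'ees, les projections $\PP^n_Y\To Y$, puis le passage des morphismes projectifs aux morphismes propres g\'en\'eraux.

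Pour une immersion ferm\'ee $i:Z\To Y$, d'ouvert compl\'ementaire $j:U\To Y$, et un morphisme quelconque $v:Y'\To Y$, on partirait d'un objet $F$ de $\SH(Z)$ en posant $E=\derR i_*(F)$. On appliquerait alors $\derL v^*$ et l'on invoquerait le triangle de localisation du th\'eor\`eme \ref{loc} pour l'objet $\derL v^*(E)$ de $\SH(Y')$: la nullit\'e du compos\'e $\derL j^*\,\derR i_*$, la pleine fid\'elit\'e de $\derR i_*$ et la fonctorialit\'e de $\derL(-)^*$ ram\`eneraient le morphisme canonique de changement de base \`a un isomorphisme, et ce sans aucune hypoth\`ese sur $v$.

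Pour la projection $p:\PP^n_Y\To Y$, qui est lisse et propre, on utiliserait la dualit\'e d'Atiyah relative: l'objet $\Sigma^\infty_T(\PP^n_{Y\, +})$ \'etant fortement dualisable dans $\SH(Y)$, le foncteur $\derR p_*$ s'exprime \`a l'aide de $\derL p_\sharp$ compos\'e avec un twist par un objet de Thom inversible. Comme $\derL p_\sharp$ v\'erifie le changement de base lisse (th\'eor\`eme \ref{lisse}) et que les twists de Thom sont compatibles aux changements de base, l'\'enonc\'e en r\'esulterait pour $p$; de mani\`ere plus \'el\'ementaire, on pourrait aussi passer par la formule du fibr\'e projectif, qui exprime $\derR p_*$ comme une somme finie de foncteurs $\derL p_\sharp$ tordus. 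Tout morphisme projectif se factorisant en une immersion ferm\'ee suivie d'une telle projection, l'\'enonc\'e vaudrait alors pour les morphismes projectifs.

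Le passage des morphismes projectifs aux morphismes propres serait le point d\'elicat. On proc\'ederait par r\'ecurrence noeth\'erienne sur $Y$ (en se ramenant d'abord au cas o\`u $Y$ est r\'eduit gr\^ace au corollaire \ref{red}, puis, via le triangle de localisation, au cas o\`u $Y$ est int\`egre), en faisant appel au lemme de Chow: pour $p:X\To Y$ propre, il existe un morphisme projectif birationnel $\pi:\widetilde{X}\To X$ tel que $p\circ\pi$ soit projectif et que $\pi$ induise un isomorphisme au-dessus d'un ouvert dense $V\subset Y$. Au-dessus de $V$, les foncteurs $\derR p_*$ et $\derR(p\circ\pi)_*$ co\"\i ncideraient, de sorte que le cas projectif s'appliquerait; au-dessus du ferm\'e compl\'ementaire $Z_0=Y\setminus V$, on combinerait le triangle de localisation, le cas d\'ej\`a acquis des immersions ferm\'ees, et l'hypoth\`ese de r\'ecurrence appliqu\'ee au morphisme propre $X\times_Y Z_0\To Z_0$. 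La v\'erification que ces carr\'es de changement de base s'assemblent de fa\c con coh\'erente, de m\^eme que la r\'eduction soigneuse au cas int\`egre, constituerait l'essentiel du travail technique; pour les d\'etails complets, on renverrait \`a \cite{ay1} (voir aussi \cite{CD3}).
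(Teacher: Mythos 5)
Votre r\'edaction suit au fond la m\^eme route que le texte, dont la preuve se r\'eduit aux deux renvois \cite[Corollaire 1.7.18]{ay1} (cas projectif) et \cite[Proposition 2.3.11]{CD3} (cas propre via le lemme de Chow); vos esquisses pour les immersions ferm\'ees (localisation, th\'eor\`eme \ref{loc}) et pour la projection $\PP^n_Y\To Y$ (dualit\'e ou formule du fibr\'e projectif, combin\'ee au th\'eor\`eme \ref{lisse}) sont conformes \`a l'argument des r\'ef\'erences.

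En revanche, votre d\'evissage du passage des morphismes projectifs aux morphismes propres comporte une erreur r\'eelle. Le lemme de Chow fournit $\pi:\widetilde{X}\To X$ projectif, avec $p\circ\pi$ projectif, qui est un isomorphisme au-dessus d'un ouvert dense $U\subseteq X$ --- et non au-dessus de l'image inverse d'un ouvert dense $V\subseteq Y$. L'affirmation selon laquelle $\derR p_*$ et $\derR(p\circ\pi)_*$ co\"\i ncident au-dessus d'un tel $V$ est donc injustifi\'ee, et la r\'ecurrence noeth\'erienne sur $Y$ ne peut pas d\'emarrer: si $Y$ est le spectre d'un corps et $X$ un sch\'ema propre non projectif (exemple de Hironaka), il n'existe ni ouvert dense strict de $Y$ ni ferm\'e strict de $Y$ sur lequel faire porter l'hypoth\`ese de r\'ecurrence, alors que ce cas n'est pas couvert par le cas projectif. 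Le d\'evissage correct (celui de \cite[Proposition 2.3.11]{CD3}) se fait sur $X$: on compare $E$ \`a $\derR\pi_*\derL\pi^*E$; par changement de base lisse appliqu\'e \`a l'immersion ouverte $U\To X$, le c\^one de $E\To\derR\pi_*\derL\pi^*E$ est nul sur $U$, donc, par localisation (\ref{loc}), de la forme $\derR i_*(-)$ pour l'immersion ferm\'ee $i:Z=X\setminus U\To X$; on conclut alors par le cas projectif pour $p\circ\pi$, le cas des immersions ferm\'ees, et une r\'ecurrence noeth\'erienne sur les ferm\'es de $X$ appliqu\'ee au morphisme propre $p|_Z$ (avec $Z\subsetneq X$), la r\'eduction au cas r\'eduit (corollaire \ref{red}) se faisant elle aussi sur $X$. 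Corrigez ce point, ou bien contentez-vous, comme le texte, de renvoyer aux r\'ef\'erences sans esquisser un argument qui, tel quel, ne fonctionne pas.
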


\begin{proof}
Voir \cite[Corollaire 1.7.18]{ay1} pour le cas o\`u $p$ est
projectif. Le cas g\'en\'eral en d\'ecoule gr\^ace au lemme
de Chow; voir \cite[Proposition 2.3.11]{CD3}.
\end{proof}

\begin{paragr}
On rappelle qu'un morphisme de sch\'emas $p:X'\To X$ est
un \emph{\'eclatement abstrait} de centre $Z$ si $p$
est propre, et si $Z$ est un sous-sch\'ema ferm\'e tel que le
morphisme induit
$$p^{-1}(X-Z)_{\mathrm{r\acute{e}d}}\To (X-Z)_{\mathrm{r\acute{e}d}}$$
soit un isomorphisme.
La topologie cdh est la topologie de Grothendieck sur la cat\'egorie
des sch\'emas engendr\'ee par les recouvrements de Nisnevich
et par les recouvrements de la forme $Z\amalg X'\To X$ pour
tout \'eclatement abstrait $X'\To X$ de centre $Z$.
Nous renvoyons le lecteur \`a \cite[Lemme 5.8 et Proposition 5.9]{SV}
(dont les \'enonc\'es et les preuves sont tout-\`a-fait valables en in\'egales caract\'eristiques)
pour une description civilis\'ee des recouvrement cdh (\`a raffinement pr\`es).

Un pr\'efaisceau de $S^1$-spectres $E$ sur la cat\'egorie
des sch\'emas v\'erifie la propri\'et\'e de descente
relativement \`a la topologie cdh si et seulement
s'il v\'erifie la propri\'et\'e de descente
relativement \`a la topologie de Nisnevich (\ref{def:descNis}) et si, pour tout
\'eclatement abstrait $p:X'\To X$ de centre $Z$, en posant $Z'=p^{-1}(Z)$,
le carr\'e commutatif \'evident
$$\xymatrix{
E(X)\ar[r]\ar[d] &E(X')\ar[d]\\
E(Z)\ar[r]&E(Z')}$$
est homotopiquement (co)cart\'esien; voir \cite{voe2,voe3}.
\end{paragr}

\begin{prop}\label{cdh}
Soit $p:X'\To X$ un \'eclatement abstrait de centre $Z$.
On consid\`ere le carr\'e cart\'esien de sch\'emas correspondant
ci-dessous.
$$\xymatrix{
Z'\ar[r]^k \ar[d]_{q}&X'\ar[d]^p\\
Z\ar[r]_i&X
}$$
On note enfin $r=pk=iq:Z'\To X$.
Alors, pour tout objet $E$ de $\SH(X)$, le carr\'e commutatif
$$\xymatrix{
E\ar[r]\ar[d] &\derR p_*\, \derL p^* E\ar[d]\\
\derR i_*\, \derL i^* E \ar[r]&\derR r_*\, \derL r^* E
}$$
est homotopiquement cocart\'esien.
\end{prop}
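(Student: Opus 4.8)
The goal is to show that the square
$$\xymatrix{
E\ar[r]\ar[d] &\derR p_*\, \derL p^* E\ar[d]\\
\derR i_*\, \derL i^* E \ar[r]&\derR r_*\, \derL r^* E
}$$
is homotopically cocartesian in $\SH(X)$. The strategy is to use the localisation theorem \ref{loc} to reduce the statement to a check over the open complement $U = X - Z$ and over $Z$ itself, and then observe that over $U$ the blow-up is an isomorphism while over $Z$ the square is trivially cocartesian.

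**Step 1: reduce to conservativity.** By the last assertion of Theorem \ref{loc}, the pair of functors $(\derL j^*, \derL i^*)$ is conservative, where $j:U\To X$ is the open immersion complementary to $i$. A square of distinguished triangles is cocartesian if and only if the induced map from the cone of the top row to the cone of the bottom row is an isomorphism; since $\derL j^*$ and $\derL i^*$ are triangulated and conservative, it suffices to check that the square becomes cocartesian after applying each of $\derL j^*$ and $\derL i^*$.

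**Step 2: pull back to $Z$.** Apply $\derL i^*$. Using Theorem \ref{loc} again, $\derL i^*\derR i_* \simeq \mathrm{id}$ (full faithfulness of $\derR i_*$), so the left vertical map $\derL i^*(E)\To\derL i^*\derR i_*\derL i^*(E)$ is an isomorphism. Similarly, applying $\derL i^*$ to the right vertical map $\derR p_*\derL p^*E\To\derR r_*\derL r^*E$ and using the proper base change theorem \ref{propre} for the cartesian square with $p$ proper, one identifies $\derL i^*\derR p_*\derL p^*E\simeq\derR q_*\derL k^*\derL p^*E=\derR q_*\derL r^*E$ and $\derL i^*\derR r_*\derL r^*E\simeq\derL i^*\derR i_*\derR q_*\derL r^*E\simeq\derR q_*\derL r^*E$, compatibly, so the right vertical map is also an isomorphism after $\derL i^*$. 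A square in which both horizontal (or both vertical) maps are isomorphisms is cocartesian; hence $\derL i^*$ of the square is cocartesian.

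**Step 3: pull back to $U$, and the main point.** Apply $\derL j^*$. Here $\derL j^*\derR i_*\simeq 0$ (the open and closed parts are orthogonal, by \ref{loc}), so both objects in the bottom row vanish, and it remains to show that the top horizontal map $\derL j^*(E)\To\derL j^*\derR p_*\derL p^*E$ is an isomorphism in $\SH(U)$. By smooth (indeed open) base change \ref{lisse} applied to the cartesian square with the restricted blow-up $p':p^{-1}(U)\To U$, one gets $\derL j^*\derR p_*\derL p^*E\simeq\derR p'_*\derL p'^*\derL j^*E$; and since $p$ is an abstract blow-up with center $Z$, the map $p^{-1}(U)_{\mathrm{red}}\To U_{\mathrm{red}}$ is an isomorphism, so by Corollary \ref{red} the functor $\derL p'^*$ is an equivalence with quasi-inverse $\derR p'_*$, whence the unit map $\derL j^*E\To\derR p'_*\derL p'^*\derL j^*E$ is an isomorphism. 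This is the crux of the argument: everything else is formal manipulation with the three base-change theorems, but this last step is where the abstract-blow-up hypothesis is genuinely used. Combining Steps 2 and 3 with the conservativity of Step 1 yields the claim.
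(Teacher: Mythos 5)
Votre preuve est correcte et suit essentiellement la m\^eme d\'emarche que celle du texte : r\'eduction par la conservativit\'e du couple $(\derL j^*,\derL i^*)$ issue du th\'eor\`eme de localisation \ref{loc}, puis changement de base propre et pleine fid\'elit\'e de $\derR i_*$ au-dessus de $Z$, et changement de base lisse joint \`a l'invariance par r\'eduction (corollaire \ref{red}) au-dessus de $U$. Votre \'etape 3 ne fait qu'expliciter l'identification $\derR p'_*\,\derL p'^*\simeq\mathrm{id}$ au-dessus de $U$, que le texte laisse implicite.
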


\begin{proof}
Soit $j:U=X-Z\To X$ l'immersion ouverte comp\-l\'e\-men\-taire de $i$.
Par le th\'eor\`eme de localisation et par le
th\'eor\`eme de changement de base lisse, l'image du carr\'e
consid\'er\'e par le foncteur $\derL j^*$ est le carr\'e
$$\xymatrix{
&\derL j^* E\ar[r]^{=} \ar[d] & \derL j^* E\ar[d]&\\
&0 \ar[r]& 0&,
}$$
et, de m\^eme, en vertu des th\'eor\`emes de localisation et de changement de base propre,
son image par le foncteur $\derL i^*$ est le carr\'e
$$\xymatrix{
&\derL i^* E\ar[r]\ar[d]_{=} & \derR q_* \, \derL q^* \, \derL i^* E\ar[d]^{=}&\\
&\derL i^* E \ar[r]& \derR q_* \, \derL q^* \, \derL i^* E&.
}$$
Les deux carr\'es ci-dessus \'etant trivialement homotopiquement cocart\'esiens,
et les foncteurs $\derL i^*$ et $\derL j^*$
formant une famille conservative de foncteurs exacts (\ref{loc}),
cela prouve la proposition.
\end{proof}

\begin{prop}\label{stabchangementbaseKGL}
Pour tout morphisme de sch\'emas $f:S'\To S$, le morphisme canonique
$$\derL f^*(\KGL)\To\KGL$$
est un isomorphisme dans $\SH(S')$.
\end{prop}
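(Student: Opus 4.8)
Notons $\mathbf 1_Y$ l'objet unit\'e de $\SH(Y)$ et $\psi_f\colon\derL f^*(\KGL)\To\KGL$ le morphisme canonique \`a \'etudier: via l'isomorphisme $\KGL\simeq\underline{\KV}$ de \ref{compKGLKV}, il provient, terme \`a terme sur le $T$-spectre, du morphisme de changement de base $\derL f^*(K)\To K$ en $K$-th\'eorie de Thomason--Trobaugh, lequel est compatible au cup produit par la classe de Bott. Le foncteur $\derL f^*$ \'etant mono\"\i dal avec $\derL f^*(T)\simeq T$, et le smash produit par $T$ \'etant inversible sur $\KGL$ (\ref{constrKGL}), les objets $\derL f^*(\KGL)$ et $\KGL$ de $\SH(S')$ sont $T$-p\'eriodiques; il suffit donc de v\'erifier que, pour tout $S'$-sch\'ema lisse $X'$ et tout $n\in\ZZ$, le morphisme $\psi_f$ induit un isomorphisme sur $\Hom_{\SH(S')}(\Sigma^n\Sigma^\infty_T(X'_+),-)$.

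Traitons d'abord le cas o\`u $f$ est \emph{lisse}. Le foncteur $\derL f^*$ s'identifie alors au foncteur de restriction le long de $(X'\to S')\mapsto(X'\to S)$ (tout $S'$-sch\'ema lisse \'etant un $S$-sch\'ema lisse); cette restriction commute \`a $R_{\AA^1}$ et \`a la localisation de Nisnevich, et envoie le pr\'efaisceau $K$, donc aussi le $T$-spectre $\underline{\KV}$, sur son analogue au-dessus de $S'$. Puisque $\KGL\simeq\underline{\KV}$ est un objet de $\SH(S)$, on en conclut que $\psi_f$ est un isomorphisme dans ce cas. (On peut aussi le d\'eduire du changement de base lisse \ref{lisse} appliqu\'e aux projections $\AA^n_S\To S$ et de la puret\'e pour leur fibr\'e tangent relatif trivial, $R_{\AA^1}(K)$ \'etant une colimite homotopique de termes $\derR\sHom(\Sigma^\infty(\AA^n_{S\, +}),K)$.) En particulier l'assertion vaut pour les immersions ouvertes.

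Pour le cas g\'en\'eral, en faisant jouer l'adjonction $(\derL g_\sharp,\derL g^*)$ pour $g\colon X'\To S'$ le morphisme structural d'un $S'$-sch\'ema lisse $X'$, l'\'egalit\'e $\derL g^*\circ\derL f^*=\derL(fg)^*$, le cas lisse appliqu\'e \`a $g$ et la repr\'esentabilit\'e \ref{thmvoe}, on se ram\`ene \`a l'\'enonc\'e suivant: pour tout morphisme de sch\'emas $h\colon Y\To S$, le morphisme $\psi_h$ induit un isomorphisme $\Hom_{\SH(Y)}(\Sigma^n\mathbf 1_Y,\derL h^*(\KGL))\To\KH_n(Y)$ ($n\in\ZZ$); cette r\'eduction est l\'egitime, le cas lisse n'y intervenant qu'au travers de morphismes lisses. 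En composant $h$ avec le morphisme structural $S\To\mathrm{Spec}\,\ZZ$, on se ram\`ene au cas $S=\mathrm{Spec}\,\ZZ$. Les deux membres v\'erifiant la descente de Nisnevich, on peut supposer $Y$ affine; par continuit\'e de $\SH$ (cf.~\cite{CD3}) et parce que $\KH$ commute aux colimites filtrantes d'anneaux, on se ram\`ene au cas o\`u $Y$ est de type fini sur $\ZZ$. Une compactification de Nagata $Y\hookrightarrow\overline Y\To\mathrm{Spec}\,\ZZ$ (immersion ouverte suivie d'un morphisme propre), jointe au cas lisse, ram\`ene enfin l'\'enonc\'e au cas o\`u le morphisme $Y\To\mathrm{Spec}\,\ZZ$ est propre.

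Le c\oe ur de la d\'emonstration est donc le cas d'un morphisme propre $h$ --- typiquement celui d'une immersion ferm\'ee $i\colon Z\To S$. Le morphisme $\derL h^*(\KGL)\To\KGL$ n'a alors pas de contenu g\'eom\'etrique imm\'ediat: appliquer $\derL i^*$ ou $\derL j^*$ ($j$ \'etant l'immersion ouverte compl\'ementaire de $i$) au triangle de localisation \ref{loc} de l'objet $\KGL$ de $\SH(S)$ ne fournit aucune information, et un v\'eritable argument est requis. Deux voies s'offrent. La premi\`ere exploite le changement de base propre \ref{propre}, le triangle de localisation \ref{loc} et la structure de la $K$-th\'eorie de Thomason--Trobaugh (suites de localisation et d'excision \cite[7.1 et 7.4]{TT}, d\'evissage), afin d'identifier $\derR i_*\,\derL i^*(\KGL)$ au spectre repr\'esentant la $K$-th\'eorie invariante par homotopie des $Z$-sch\'emas lisses, puis conclut par la pleine fid\'elit\'e de $\derR i_*$ (\ref{loc}). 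La seconde, plus exp\'editive, consiste \`a utiliser la description de $\KGL$ comme spectre de Snaith $\Sigma^\infty_T(\PP^\infty_+)[\beta^{-1}]$ au-dessus d'une base arbitraire (cf.~la remarque qui suit \ref{thmvoe}): $\derL f^*$ commutant \`a $\Sigma^\infty_T$, \`a l'adjonction d'un point de base et aux colimites homotopiques, et envoyant $\PP^\infty$ ainsi que la classe de Bott $\beta$ sur leurs analogues au-dessus de $S'$, la stabilit\'e par changement de base en r\'esulte aussit\^ot. C'est ce dernier point (ou l'analyse fine de la premi\`ere voie) qui constitue la difficult\'e principale.
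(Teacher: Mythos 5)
There is a genuine gap: the case that carries all the content is never actually proved. Your long chain of reductions (the smooth case, passage to $S=\mathrm{Spec}\,\ZZ$, continuity of $\SH$, commutation of $\KH$ with filtered colimits of rings, Nagata compactification --- none of these inputs being established in the paper, and none being needed) stops exactly where the difficulty begins, namely at a non-smooth base change, which is precisely where the $K$-theory presheaf fails to be base-change invariant. At that point you offer only (i) a sketch --- identify $\derR i_*\derL i^*(\KGL)$ with the spectrum representing $\KH$ of smooth $Z$-schemes using proper base change, localization and Thomason--Trobaugh --- which is not an argument but a restatement of what has to be proved (Theorem \ref{propre} gives no handle on comparing $\derL i^*\KGL_S$ with $\KGL_Z$ without further input); and (ii) an appeal to the Snaith description $\Sigma^\infty_T(\PP^\infty_+)[\beta^{-1}]$ of $\KGL$ over an arbitrary base. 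The latter cannot serve as the missing step in this paper: the Snaith description is mentioned only after Theorem \ref{thmvoe}, as a \emph{consequence} of the representability theorem, and its known proofs over a general (possibly singular) base \cite{GS,SO} either presuppose or re-establish exactly the base-change stability of $\KGL$ at issue, so invoking it is at best a much heavier external input and at worst circular. You flag this point yourself as the \emph{principal difficulty}; that is an accurate self-assessment, and it means the proof is incomplete.

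By contrast, the paper's proof is a one-liner and bypasses all of your reductions: by Remark \ref{P1spectrekth}, $\KGL$ is the \emph{periodic $\PP^1$-spectrum associated to the canonical morphism} $\PP^1\To\mathbf{Z}\times\mathit{BGL}_\infty$, whose terms are the ind-object $\mathbf{Z}\times\mathit{BGL}_\infty$ (a homotopy colimit of Grassmannians, i.e.\ of smooth schemes defined over $\mathrm{Spec}\,\ZZ$) and whose bonding maps come from the Bott class, also defined over $\mathrm{Spec}\,\ZZ$. Since $\derL f^*$ is monoidal, commutes with homotopy colimits, and satisfies $\derL f^*\Sigma^\infty_T(X_+)=\Sigma^\infty_T((S'\times_S X)_+)$, it transports this entire construction over $S$ to the same construction over $S'$, and the canonical map is therefore an isomorphism for an \emph{arbitrary} $f$, with no case distinction. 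This is exactly why the paper routes the statement through the geometric $\PP^1$-spectrum model of Remark \ref{P1spectrekth} rather than through $\underline{\KV}$ or the presheaf-level map $\derL f^*(K)\To K$, which is where your argument starts and which is not an equivalence in general. Your second route is close in spirit (replace $\KGL$ by a base-change-stable geometric model), but you picked a model whose identification with $\KGL$ is a deep theorem downstream of the present proposition, instead of the model the paper has already set up for free.
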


\begin{proof}
On a un isomorphisme canonique
$$\derL f^*(\mathbf{Z}\times\BGL_\infty)\simeq
\mathbf{Z}\times\BGL_\infty$$
dans la cat\'egorie homotopique instable $\mathcal{H}(S')$
(car $\mathbf{Z}\times\BGL_\infty$ est une colimite homotopique
de sch\'emas lisses de la forme $\mathit{GL}_{n_1}\times\cdots\times \mathit{GL}_{n_r}$).
La proposition r\'esulte donc aussit\^ot de la description de $\KGL$ comme le $\PP^1$-spectre
p\'eriodique associ\'e au morphisme de Bott
$\PP^1\To \mathbf{Z}\times\BGL_\infty$; voir la remarque \ref{P1spectrekth}
et la proposition \ref{prop:P1spectrekth}.
\end{proof}

\begin{thm}\label{cdhKH}
La $K$-th\'eorie invariante par homotopie v\'erifie
la propri\'et\'e de descente relativement \`a la topologie cdh.
\end{thm}

\begin{proof}
On sait que $\KH$ v\'erifie la propri\'et\'e de descente
relativement \`a la topologie de Nisnevich.
Pour v\'erifier la descente cdh, il suffit donc de montrer la
propri\'et\'e de Mayer-Vietoris relativement aux \'eclatements
abstraits; or, en vertu de la proposition pr\'ec\'edente et du th\'eor\`eme \ref{thmvoe},
cela r\'esulte de l'\'evaluation en $X$ du carr\'e homotopiquement (co)cart\'esien
de la proposition \ref{cdh} appliqu\'ee \`a $E=\KGL$.
\end{proof}

\begin{cor}
Pour tout entier $q>0$, la $K$-th\'eorie de Bass-Tho\-ma\-son Trobaugh
\`a coefficients dans $\mathbf{Z}[1/q]$ (resp. dans $\mathbf{Z}/q\mathbf{Z}$)
v\'erifie la propri\'et\'e de descente relativement \`a la topologie cdh
pour les $\mathbf{Z}/q\mathbf{Z}$-sch\'emas (resp. les
$\mathbf{Z}[1/q]$-sch\'emas).
\end{cor}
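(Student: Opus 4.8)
Le plan est de ramener les deux assertions au théorème \ref{cdhKH} au moyen de la comparaison entre la $K$-théorie de Bass-Thomason-Trobaugh $K^\TT$ et la $K$-théorie invariante par homotopie $\KH$ fournie par \cite[Théorème 9.6]{TT} (comparaison déjà invoquée dans la preuve du corollaire \ref{repKTTcarp}).

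On commence par observer que, pour tout groupe abélien $A$, le passage aux coefficients dans $A$ --- c'est-à-dire le produit tensoriel dérivé par le spectre de Moore de $A$, ce qui revient à une localisation lorsque $A=\ZZ[1/q]$ et à la cofibre homotopique de la multiplication par $q$ lorsque $A=\ZZ/q\ZZ$ --- est un foncteur exact, qui préserve donc les carrés homotopiquement (co)cartésiens ; par conséquent, si un préfaisceau de $S^1$-spectres vérifie la propriété de descente relativement à la topologie cdh, il en va de même de sa variante à coefficients dans $A$. En particulier, d'après le théorème \ref{cdhKH}, les préfaisceaux $X\longmapsto\KH(X)\otimes^\derL A$, pour $A=\ZZ[1/q]$ ou $A=\ZZ/q\ZZ$, vérifient la descente cdh sur la catégorie de tous les schémas noethériens de dimension de Krull finie.

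Traitons le cas des coefficients dans $\ZZ[1/q]$ sur la catégorie des $\ZZ/q\ZZ$-schémas. Sur un tel schéma $X$, l'entier $q$ est nilpotent (en fait nul) dans $\mathcal{O}_X$, de sorte que, d'après \cite[Théorème 9.6]{TT}, le morphisme canonique $K^\TT(X)\To\KH(X)$ devient un isomorphisme après $-\otimes\ZZ[1/q]$. Ce morphisme étant naturel en $X$, on en déduit un isomorphisme de préfaisceaux de $S^1$-spectres $K^\TT(-)\otimes\ZZ[1/q]\simeq\KH(-)\otimes\ZZ[1/q]$ sur la catégorie des $\ZZ/q\ZZ$-schémas. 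Comme tout recouvrement cdh d'un $\ZZ/q\ZZ$-schéma est formé de $\ZZ/q\ZZ$-schémas (les morphismes étales, les immersions fermées et les éclatements d'un $\ZZ/q\ZZ$-schéma sont encore des $\ZZ/q\ZZ$-schémas), et comme $\KH(-)\otimes\ZZ[1/q]$ vérifie la descente cdh, il en est de même de $K^\TT(-)\otimes\ZZ[1/q]$ sur les $\ZZ/q\ZZ$-schémas. Le cas des coefficients dans $\ZZ/q\ZZ$ sur la catégorie des $\ZZ[1/q]$-schémas est tout à fait analogue : sur un $\ZZ[1/q]$-schéma $X$, l'entier $q$ est inversible dans $\mathcal{O}_X$, et \cite[Théorème 9.6]{TT} assure cette fois que $K^\TT(X)\To\KH(X)$ devient un isomorphisme après $-\otimes^\derL\ZZ/q\ZZ$ ; on conclut exactement comme ci-dessus, la sous-catégorie pleine des $\ZZ[1/q]$-schémas étant elle aussi stable par recouvrements cdh.

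Le seul contenu véritable de cette démonstration est l'invocation de \cite[Théorème 9.6]{TT} (c'est-à-dire le fait que la fibre homotopique de $K^\TT\To\KH$ a des groupes d'homotopie de $q$-torsion sur les $\ZZ/q\ZZ$-schémas et uniquement $q$-divisibles sur les $\ZZ[1/q]$-schémas, ce qui se ramène, via le théorème fondamental de Bass, à l'étude des groupes de $K$-théorie nil $N^iK$) ; le reste n'est que formalisme : exactitude du passage aux coefficients et stabilité des sous-catégories de schémas considérées sous les recouvrements cdh. C'est donc ce point de formalisme --- s'assurer que la descente cdh du préfaisceau $\KH(-)\otimes^\derL A$, établie sur la catégorie de tous les schémas, se restreint bien à la sous-catégorie pleine pertinente --- qui demande un soin minimal, le cœur arithmétique étant déjà acquis.
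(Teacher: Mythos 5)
Votre démonstration est correcte et suit exactement la même voie que celle du texte, dont la preuve se réduit à la phrase «~cela résulte du théorème \ref{cdhKH} et de \cite[Th\'eor\`eme 9.6]{TT}~»~; vous ne faites qu'expliciter les points formels laissés implicites (exactitude du passage aux coefficients, stabilité des sous-catégories de $\mathbf{Z}/q\mathbf{Z}$- et $\mathbf{Z}[1/q]$-schémas par recouvrements cdh), et l'appariement coefficients/hypothèse sur $q$ est le bon.
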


\begin{proof}
Cela r\'esulte du th\'eor\`eme \ref{cdhKH} et de \cite[Th\'eor\`eme 9.6]{TT}.
\end{proof}

En guise d'application, on en d\'eduit la forme faible suivante de la
conjecture de Weibel en caract\'eristique positive, sous l'hypoth\`ese
de l'existence locale de r\'esolutions des singularit\'es (on rappelle que cela
n'est connu qu'en caract\'eristique nulle).

\begin{thm}\label{weibel1}
Soit $k$ un corps. On suppose que $k$ admet une r\'esolution locale des
singularit\'es dans le sens o\`u, pour tout $k$-sch\'ema de type fini $X$,
il existe un recouvrement cdh $X'\To X$ avec $X'$ r\'egulier.
Alors, pour tout $k$-sch\'ema $S$ de dimension de Krull $\leq d$,
et pour tout $S$-sch\'ema lisse $X$, on a
$$\KH_i(X)=0\quad \text{pour tout $i<-d$.}$$
Si, sous les m\^emes hypoth\`eses, le corps $k$ est de caract\'eristique $p>0$, on a donc:
$$K^B_i(X)\otimes
\mathbf{Z}[1/p]=0\quad \text{pour tout $i<-d$.}$$
\end{thm}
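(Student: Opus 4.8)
The strategy is a standard descent-plus-induction argument, reducing Weibel's vanishing statement to the cdh-descent theorem \ref{cdhKH} together with the $\AA^1$-invariance of $\KH$. First I would reduce to the case $U = X$: since $U$ is smooth over $X$ and $X$ has Krull dimension $\leq d$, the scheme $U$ has Krull dimension $\leq d + \dim U$; but smoothness does not decrease the dimension in a way that helps, so instead I note that a smooth $X$-scheme $U$ is again a $k$-scheme of finite type, and it suffices to prove the statement for all $k$-schemes of finite type of Krull dimension $\leq d$ directly (one checks that the relevant bound is governed by $\dim U$, and by covering $U$ by affines and using Nisnevich (Zariski) descent one reduces to $U$ affine, hence to an arbitrary finite-type $k$-scheme of dimension $\leq d$). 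So the real content is: if $X$ is a $k$-scheme of finite type with $\dim X \leq d$, then $\KH_i(X) = 0$ for $i < -d$.

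Next I would argue by induction on $d = \dim X$. For $d = 0$ the scheme $X$ is a finite disjoint union of spectra of Artinian $k$-algebras; by Corollary \ref{red} (the reduction $X_{\mathrm{red}} \to X$ induces an equivalence on $\SH$, hence on $\KH$) we may assume $X$ reduced, so $X$ is a finite product of finite field extensions of $k$, which are regular; for regular schemes $\KH$ agrees with $K^B$, which agrees with Quillen $K$-theory, and negative $K$-groups of regular schemes vanish. For the inductive step, use the hypothesis: there is a cdh-cover $X' \to X$ with $X'$ regular. By the standard noetherian induction on cdh-covers (refining to abstract blow-ups, as recalled before Proposition \ref{cdh}), I may assume $X' \to X$ is a single abstract blow-up with some center $Z \subsetneq X$ and exceptional fiber $p^{-1}(Z)$. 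Then $\dim Z \leq d - 1$ and $\dim p^{-1}(Z) \leq d - 1$ (since $p^{-1}(Z)$ is a proper closed subset of $X'$, which has dimension $\leq d$; here one uses that blow-ups do not raise dimension for the scheme $X'$ itself — this needs the finite-type hypothesis over $k$, so $X'$ is again finite-type over $k$ of dimension $\leq d$). Wait: one must be careful that $\dim X' \leq d$ as well, and that $\dim p^{-1}(Z) \leq d-1$; this follows because $p$ is proper birational over the dense open $X - Z$, so $\dim X' = \dim X \leq d$, and $p^{-1}(Z)$ has empty interior in $X'$, hence dimension $\leq d - 1$.

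Now apply Theorem \ref{cdhKH}: the abstract blow-up square gives a homotopy (co)cartesian square of $S^1$-spectra relating $\KH(X), \KH(X'), \KH(Z), \KH(p^{-1}(Z))$, hence a long exact Mayer–Vietoris sequence
$$\cdots \to \KH_i(X) \to \KH_i(X') \oplus \KH_i(Z) \to \KH_i(p^{-1}(Z)) \to \KH_{i-1}(X) \to \cdots$$
For $i < -d$: the terms $\KH_i(Z)$ and $\KH_i(p^{-1}(Z))$ vanish by the inductive hypothesis (both schemes have dimension $\leq d-1$, and $i < -d < -(d-1)$), and $\KH_i(X')$ vanishes because $X'$ is regular and negative $KH$ (equivalently negative $K^B$) of a regular noetherian scheme vanishes; hence $\KH_i(X) = 0$. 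This closes the induction. Finally, for $k$ of characteristic $p > 0$, the last assertion follows from Corollary \ref{repKTTcarp}: over a $\ZZ/p$-scheme, $\KH$ and $K^B$ agree after inverting $p$, so $K^B_i(U) \otimes \ZZ[1/p] \cong \KH_i(U) \otimes \ZZ[1/p] = 0$ for $i < -d$.

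The main obstacle is the bookkeeping of the noetherian induction on cdh-covers — reducing a general cdh-cover with regular source to a sequence of abstract blow-ups while keeping control of the dimensions of all the schemes involved ($X'$, the centers, and the exceptional fibers). This is where one invokes \cite[Lemme 5.8 et Proposition 5.9]{SV} for the structure of cdh-covers, and where the finite-type hypothesis over $k$ is genuinely used to guarantee that every scheme appearing in the induction is again finite-type over $k$ of the expected dimension.
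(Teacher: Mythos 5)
There is a genuine gap, in two places. First, your opening reduction replaces the statement by the claim that $\KH_i(V)=0$ for $i<-\dim V$ for every finite type $k$-scheme $V$. This is strictly weaker than the theorem: $U$ is only assumed smooth over a scheme $X$ of dimension $\leq d$, so $\dim U$ may well exceed $d$, and the asserted bound is $i<-d$, not $i<-\dim U$ (covering $U$ by affines does not lower its dimension). The paper keeps the stronger bound by never running the argument over $U$ itself: it introduces the presheaf $E\colon V\longmapsto \KH(V\times_X U)$ on finite type $X$-schemes (which still satisfies cdh descent, $U$ being smooth over $X$, by Theorem \ref{cdhKH}) and computes $\KH_*(U)$ as cdh hypercohomology of the $d$-dimensional base $X$ with coefficients in $E$.

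Second, and more seriously, your induction uses the hypothesis as if it were \emph{global} resolution of singularities. The assumption only provides, for each $X$, a cdh cover $X'\To X$ with $X'$ regular; it does not provide an abstract blow-up with regular source. If you refine such a cover, via \cite[Lemme 5.8 et Proposition 5.9]{SV}, into Nisnevich covers and abstract blow-ups, the intermediate total spaces are in general neither regular nor of smaller dimension, so the Mayer--Vietoris sequences you invoke contain terms about which neither regularity nor the inductive hypothesis says anything, and the induction does not close. (Even granting a single abstract blow-up, the bookkeeping $\dim Z\leq d-1$, $\dim p^{-1}(Z)\leq d-1$, $\dim X'\leq d$ requires reducing to $X$ integral and choosing the blow-up birational with nowhere dense center, i.e.\ essentially global resolution; an abstract blow-up as defined here can have $Z=X$ or components of $X'$ of large dimension over $Z$.) This is precisely why the paper argues instead with the descent spectral sequence $E_2^{p,q}=H^p_{\mathrm{cdh}}(X,\, \KH_{-q}(-\times_X U)^{\sim}_{\mathrm{cdh}})\Rightarrow \KH_{-p-q}(U)$: to show the cdh sheaves vanish for $q>0$ one only needs that every class in $\KH_{-q}(V)$ dies on \emph{some} cdh cover of $V$, which is exactly what cdh-local regularity gives (the members of the cover need not appear in any blow-up tower), and the vanishing of $E_2^{p,q}$ for $p>d$ comes from the bound of the cdh cohomological dimension by the Krull dimension (appendice de \cite{SV}), whence strong convergence and the conclusion. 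Your treatment of the characteristic $p$ statement via \cite[Th\'eor\`eme 9.6]{TT} does agree with the paper.
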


\begin{proof}
Notons $E$ le pr\'efaisceau de $S^1$-spectres sur la cat\'egorie des
$S$-sch\'emas de type fini d\'efini par
$$U\longmapsto \KH(U\times_S X)\, .$$
On d\'eduit du th\'eor\`eme \ref{cdhKH} que,
pour tout $k$-sch\'ema de type fini $U$, on a, pour tout
entier $n$, un isomorphisme canonique
$$H^n_{\textrm{cdh}}(U,E)=\KH_{-n}(U\times_S X) \, ,$$
o\`u $H^*_{\textrm{cdh}}(U,E)$ d\'esigne l'hyper-cohomologie cdh
de $U$ \`a coefficients dans le faisceau cdh de $S^1$-spectres associ\'e \`a $E$.
On dispose donc de la suite spectrale ci-dessous
$$E^{p,q}_2=H^p_{\textrm{cdh}}(S,H^q(E)) \Rightarrow
\KH_{-p-q}(X) \, ,$$
o\`u $H^q(E)$ d\'esigne le pr\'efaisceau de groupes ab\'eliens
$U\longmapsto \KH_{-q}(U\times_S X)$, et o\`u $H^*_{\textrm{cdh}}(S,F)$
d\'esigne la cohomologie  de $S$ \`a coefficients dans le
faisceau cdh associ\'e \`a $F$.
La dimension cohomologique cdh \'etant major\'ee par la dimension
de Krull (voir l'appendice de \cite{SV}), on a $E^{p,q}_2=0$ d\`es que $p>d$, et
la suite spectrale ci-dessus converge fortement.
Or, pour $q>0$, le faisceau cdh associ\'e \`a $H^q(E)$
est isomorphe \`a z\'ero: comme on a suppos\'e que $k$ admet une
r\'esolution des singularit\'es locale, il suffit de v\'erifier que $\KH_{-q}(Y)=0$ pour
$Y$ r\'egulier, ce qui est bien connu. Cela implique que
$E^{p,q}_2=0$ d\`es que $p+q>d$, et ach\`eve donc la d\'emonstration
de la premi\`ere assertion.

Dans le cas tr\`es hypoth\'etique o\`u $k$ est un corps de caract\'eristique $p>0$
qui admet une r\'esolution des singularit\'e locale au sens ci-dessus, on obtient la
seconde assertion \`a partir de la premi\`ere gr\^ace \`a \cite[Th\'eor\`eme 9.6]{TT}.
\end{proof}

\bibliography{KHdescente}
\bibliographystyle{amsplain}
\end{document}